\renewcommand*{\backref}[1]{}
\renewcommand*{\backrefalt}[4]{%
  \ifcase #1 (Not cited.)%
  \or        (Page~#2.)%
  \else      (Pages~#2.)%
  \fi}
\renewcommand{\[}{\begin{equation}\begin{aligned}}
\renewcommand{\]}{\end{aligned} \end{equation}}
\newcommand{\ddb}{\sqrt{-1}\partial\bar\partial}
\newcommand{\ddbar}{\partial\bar\partial}
\newcommand{\bddbar}{\sqrt{-1}\partial_b\bar{\partial_b}}
\newcommand{\bddb}{\partial_b\bar{\partial_b}}
\newtheorem{thm}{Theorem}
\newtheorem{prop}[thm]{Proposition}
\newtheorem{lemma}[thm]{Lemma}
\newtheorem{cor}[thm]{Corollary}
\newtheorem{conj}[thm]{Conjecture}
\theoremstyle{remark}
\newtheorem{remark}[thm]{Remark}
\theoremstyle{definition}
\theoremstyle{definition}
\newtheorem{exmp}{Example}[section]
\numberwithin{equation}{section}
\numberwithin{thm}{section}
\author{Shih-Kai Chiu}
\address{Mathematical Institute, University of Oxford, Oxford, UK}
\email{Shih-Kai.Chiu@maths.ox.ac.uk}
\title[Nonuniqueness of Calabi-Yau metrics]{Nonuniqueness of
  Calabi-Yau metrics with maximal volume growth}
\date{}
\begin{document}

\begin{abstract}
  We construct a family of inequivalent Calabi-Yau metrics on
  $\mathbf{C}^3$ asymptotic to $\mathbf{C} \times A_2$ at infinity, in
  the sense that any two of these metrics cannot be related by a
  scaling and a biholomorphism. This provides the first example of
  families of Calabi-Yau metrics asymptotic to a fixed tangent cone at
  infinity, while keeping the underlying complex structure fixed. We
  propose a refinement of a conjecture of Sz\'ekelyhidi~\cite{Sz20}
  addressing the classification of such metrics.
\end{abstract}

\maketitle

\section{Introduction}

Since the celebrated work of Yau~\cite{Yau}, Calabi-Yau manifolds have
been studied intensively in K\"ahler geometry, complex algebraic
geometry and physics. In the complete non-compact case, much has been
known in $2$ complex dimensions since the foundational works of
Kronheinmer~\cite{K1}\cite{K2} (see for example
\cite{CC15}\cite{CC19}\cite{CC21}\cite{SZ} and the references
therein). In higher dimensions, Conlon-Hein~\cite{CH} recently
classified asymptotically conical Calabi-Yau manifolds, building on
the important work of Tian-Yau~\cite{TY}.

In this paper, we are interested in Calabi-Yau manifolds with maximal
volume growth, which include asymptotically conical manifolds. In this
more general setting, the tangent cones at infinity are still
Calabi-Yau cones. However, in general these cones can have
non-isolated singularities. Many examples of Calabi-Yau manifolds with
maximal volume growth and singular tangent cones at infinity have been
constructed over the years. Biquard-Gauduchon~\cite{BG} constructed
hyperk\"ahler metrics on cotangent bundles of certain hermitian
symmetric spaces, whose tangent cones are realized as nilpotent orbit
closures in $sl(N, \mathbf{C})$. Joyce~\cite{Joyce} constructed QALE
metrics as resolutions of $\mathbf{C}^n/ \Gamma$, where the action of
the discrete group $\Gamma$ is not free. This approach has been
generalized by Conlon-Degeratu-Rochon~\cite{CDR} to admit more
complicated singularities. More recently, Conlon-Rochon~\cite{CR},
Li~\cite{Li} and Sz\'ekelyhidi~\cite{Sz19} constructed Calabi-Yau
metrics on $\mathbf{C}^3$ with tangent cone given by
$\mathbf{C} \times A_1$ at infinity. Here $A_1$ denotes the singular
hypersurface given by
$ \{x_1^2 + x_2^2 + x_3^2 = 0 \} \subset \mathbf{C}^3$ equipped with
the flat cone metric. We remark that in \cite{CR} and \cite{Sz19},
there are various generalizations in higher dimensions that admit
tangent cones of the form $\mathbf{C} \times V$, where $V$ is a
Calabi-Yau cone with an isolated singularity at the vertex.

The classification of Calabi-Yau manifolds with maximal volume growth
is still largely an uncharted territory. To begin, it is expected that
the tangent cones at infinity are unique, as they are affine varieties
\cite{LSz}. Therefore one might to try to classify Calabi-Yau
manifolds asymptotic to a certain tangent cone at infinity. A recent
breakthrough that fits into this picture is due to
Sz\'ekelyhidi~\cite{Sz20}, who showed that the Calabi-Yau metric on
$\mathbf{C}^n$ asymptotic to $\mathbf{C} \times A_1$ is unique up to
scaling and biholomorphism. Their method is to compare the unknown
metric to scalings of a model metric using better and better
holomorphic gauges. These gauges are given by adapted sequences of
bases in Donaldson-Sun theory~\cite{DS17} in combination with certain
automorphisms of the cone at infinity. The next simplest case is to
study Calabi-Yau metrics on $\mathbf{C}^3$ asymptotic to
$\mathbf{C} \times A_2$ at infinity, where $A_2$ is the singular
hypersurface given by
$\{ x_1^2 + x_2^2 + x_3^3 = 0 \} \subset \mathbf{C}^3$. An example of
such a metric has been obtained by Sz\'ekelyhidi in \cite{Sz19}.

To state our result, we recall the following setup originally
considered in \cite{Sz19}. Consider the hypersurface
$X_1 \subset \mathbf{C}^{n+1}$ given by the equation
\[
z + f(x_1, \ldots, x_n) = 0,
\]
where $f: \mathbf{C}^n \to \mathbf{C}$ is a polynomial, so $X_1$ is
biholomorphic to $\mathbf{C}^n$. Write
$\mathbf{x} = (x_1,\ldots, x_n)$.

\theoremstyle{definition}
\newtheorem{setup}[thm]{Setup}
\begin{setup}
  \label{setup}
  We impose the following restrictions
  on $f$:
  \begin{itemize}
  \item $x_i$ has weight $w_i > 0$ under the action of $t \in \mathbf{C}^*$:
    \[
      t \cdot x_i = t^{w_i}x_i.
    \]
  \item $f$ is homogeneous of degree $d > 1$:
    \[
      t \cdot f(\mathbf{x}) = f(t \cdot \mathbf{x}) = t^d f(\mathbf{x}).
    \]
  \item $V_0 = f^{-1}(0) \subset \mathbf{C}^n$ has an isolated singularity at
    $0 \in \mathbf{C}^n$.
  \item $V_0$ admits a Calabi-Yau cone metric $\omega_{V_0}$
    compatible with the $\mathbf{C}^*$ action.
  \end{itemize}

\end{setup}

Suppose that we are in the above setup. Let
$V_1 = \{ 1+ f(\mathbf{x}) = 0 \} \subset \mathbf{C}^n$. Then $V_1$
admits by \cite{CH13} a unique asymptotically conical Calabi-Yau
metric $\omega_{V_1}$ with asymptotic cone $V_0$ (see
Section~\ref{sec:x1} for the precise meaning of uniqueness).

We would like to degenerate $X_1$ to its ``tangent cone at infinity'':
let us define a $\mathbf{C}^*$ action on $\mathbf{C}^{n+1}$ given by
$F_t(z,\mathbf{x}) = (tz, t\cdot \mathbf{x})$. Then $F_t^{-1}X_1$ has
the equation
\[
t^{1-d}z + f(\mathbf{x}) = 0.
\]
Since $d > 1$, as $t \to \infty$, $F_t^{-1}X_1 \to X_0$, where
\[
  X_0 = \mathbf{C} \times V_0
\]
is equipped with the Calabi-Yau cone metric
$\omega_0 = \ddb |z|^2 + \omega_{V_0}$. This fits into the framework
of Donaldson-Sun theory~\cite{DS17} (see also \cite{Liu} for the case
when the tangent cone at infinity is smooth but the manifold is not
necessarily polarized). In \cite{Sz19}, Sz\'ekelyhidi constructed a
Calabi-Yau metric on $X_1$ asymptotic to $X_0$ at infinity. From the
fibration point of view, the map $z: X_1 \to \mathbf{C}$ has regular
fibers biholomorphic to $V_1$, and the central fiber is given by
$V_0$. Roughly speaking, the metric on $X_1$ can be seen as a
perturbation of the ``semi-Ricci-flat'' metric which restricts to
scalings of $\omega_{V_1}$ on the regular fibers and $\omega_{V_0}$ on
the central fiber.

In this paper, we restrict to the case when $n=3$. Set
$f = x_1^2 + x_2^2 + y^3$, where we write $y = x_3$, so $V_0$ is the
$A_2$ singularity. Recall that $V_0 \cong \mathbf{C}^3/\mathbf{Z}_3$
is equipped with the flat cone metric. The variables $z, x_1, x_2, y$
have weights $1,3,3,2$, respectively, and so $d = 6$ (see
Example~\ref{exmp:a2} for more details). We consider the hypersurface
$X_{1,b} \subset \mathbf{C}^4$ given by
\[
  z + b y + x_1^2 + x_2^2 + y^3 = 0,
\]
where $b \in \mathbf{C}$. Under the $\mathbf{C}^*$ action $F_t$,
$X_{1,b}$ still degenerates to $X_0$. However, the fibration structure
is different from $X_1$ considered in \cite{Sz19} when $b \ne 0$:
there are now two singular fibers, each of which has one $A_1$
singularity. For each $b \in \mathbf{C}$, we construct Calabi-Yau
metrics on $X_{1,b}$ asymptotic to $X_0$. We then distinguish these
metrics using certain normalization of holomorphic functions with
polynomial growth. As a consequence, we obtain the main theorem of
this paper:

\begin{thm}
\label{thm:A2}
There exists a family of Calabi-Yau metrics $\omega_b$,
$b \in [0,\infty)$, on $\mathbf{C}^3$ with tangent cone
$\mathbf{C} \times A_2$ at infinity. Any $\omega_b$ and $\omega_{b'}$
are related by a biholomorphism and a scaling if and only if $b = b'$.
\end{thm}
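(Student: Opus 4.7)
The plan is to construct the family $\omega_b$ by a gluing scheme, and then distinguish its members by analyzing the action of any biholomorphism on holomorphic functions of polynomial growth.

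For the construction, I would adapt the method of Sz\'ekelyhidi~\cite{Sz19} to the fibration $z:X_{1,b}\to\mathbf{C}$. Its smooth fibers are affine surfaces asymptotic to the $A_2$ cone, each carrying an asymptotically conical Calabi-Yau metric by~\cite{CH13}; when $b\ne 0$ the fibration has two critical values of $z$, each singular fiber containing a single $A_1$ singularity. An approximately Calabi-Yau model is assembled by patching (i) a semi-Ricci-flat form $dz\wedge d\bar z+\omega_{V_c}$ over the regular part of the base, (ii) a local Calabi-Yau smoothing of $\mathbf{C}\times A_1$ near each critical fiber as in~\cite{Sz19,Li,CR}, and (iii) the flat product cone $\omega_0$ near spatial infinity of $X_{1,b}$; one then corrects the defect to a Ricci-flat K\"ahler metric by solving a complex Monge-Amp\`ere equation in weighted H\"older spaces, in the spirit of~\cite{TY,CH,Sz19}. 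The main analytic novelty over~\cite{Sz19} is that for $b>0$ there are two well-separated critical fibers rather than one.

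For inequivalence, suppose $\Phi:X_{1,b}\to X_{1,b'}$ is a biholomorphism with $\Phi^*\omega_{b'}=\lambda^2\omega_b$ for some $\lambda>0$. By Donaldson-Sun theory~\cite{DS17} applied at infinity, $\Phi$ induces an automorphism $\Psi$ of $X_0=\mathbf{C}\times\mathbf{C}^2/\mathbf{Z}_3$ that rescales the flat cone metric by $\lambda^2$; since the only $U(3)$-elements commuting with the $\mathbf{Z}_3$-action on $\mathbf{C}\times\mathbf{C}^2$ are diagonal, $\Psi$ preserves the product decomposition and acts on generators by
\[
z\mapsto\tilde\lambda z,\quad y\mapsto\tilde\mu y,\quad x_1\mp ix_2\mapsto\tilde\nu_\pm (x_1\mp ix_2),
\]
with $|\tilde\lambda|=\lambda$, $|\tilde\mu|=\lambda^2$, $|\tilde\nu_\pm|=\lambda^3$ and $\tilde\nu_+\tilde\nu_-=\tilde\mu^3$ (so that $\Psi$ sends $x_1^2+x_2^2+y^3$ to a scalar multiple of itself). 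By the expected match of filtered dimensions on $X_{1,b}\cong\mathbf{C}^3$ with the graded coordinate ring of $X_0$, the pullbacks $\Phi^*z_{b'},\Phi^*y_{b'},\Phi^*x_{i,b'}$ lie in $\mathrm{span}\{1,z\}$, $\mathrm{span}\{1,z,y,z^2\}$, and $\mathrm{span}\{1,z,y,z^2,zy,z^3,x_1,x_2\}$ respectively, and the product form of $\Psi$ excludes a $z^2$ term in $\Phi^*y_{b'}$ and $z^3,zy$ terms in $\Phi^*x_{i,b'}$. Substituting the expansions
\[
\Phi^*y_{b'}=\tilde\mu y+c^{(y)}_1 z+c^{(y)}_0,\qquad \Phi^*x_{i,b'}=\tilde\nu_{ij}x_j+c^{(i)}_3 z^2+c^{(i)}_2 y+c^{(i)}_1 z+c^{(i)}_0
\]
into the identity $\Phi^*z_{b'}+b'\Phi^*y_{b'}+(\Phi^*x_{1,b'})^2+(\Phi^*x_{2,b'})^2+(\Phi^*y_{b'})^3=0$, and reducing in the coordinate ring of $X_{1,b}$ via $z+by+x_1^2+x_2^2+y^3=0$, the coefficients of $x_j\cdot(\text{weight-}(k-3)\text{ monomial})$ at weights $k=3,4,5$ arise only from the cross-terms $2\tilde\nu_{ij}x_j c^{(i)}_\ell(\text{monomial})$; invertibility of $\tilde\nu$ forces $c^{(i)}_0=c^{(i)}_1=c^{(i)}_2=c^{(i)}_3=0$. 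The $z^3$-coefficient at weight $3$ reads $(c^{(y)}_1)^3=0$, and once $c^{(i)}_2=0$ the $y^2$-coefficient at weight $4$ reads $3\tilde\mu^2 c^{(y)}_0=0$, forcing $c^{(y)}_1=c^{(y)}_0=0$. With all corrections eliminated, the weight-$1$ $z$-coefficient becomes $\tilde\lambda=\tilde\mu^3$, which combined with $|\tilde\lambda|=\lambda$ and $|\tilde\mu|=\lambda^2$ yields $\lambda^5=1$, hence $\lambda=1$; the weight-$2$ $y$-coefficient gives $b'=\tilde\mu^2 b$ with $|\tilde\mu|=1$; and reality of $b,b'\in[0,\infty)$ forces $\tilde\mu^2=1$, so $b=b'$.

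The most delicate point is the weight-by-weight elimination of the correction constants, since the defining relation mixes weights (the formally weight-$6$ combination $x_1^2+x_2^2+y^3$ has actual growth rate $2$ on $X_{1,b}$) and cross-terms between leading and subleading parts contribute at many intermediate weights. This bookkeeping is cleanest in a Donaldson-Sun-style adapted basis for the filtered ring of holomorphic functions of polynomial growth, which is presumably what the introduction refers to as the ``normalization of holomorphic functions with polynomial growth.''
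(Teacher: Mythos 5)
Your construction sketch takes a different route from the paper's: you propose to assemble an approximate solution directly on $X_{1,b}$ by patching a semi-Ricci-flat form over the regular fibers with local Calabi--Yau models of $\mathbf{C}\times A_1$ near the two singular fibers. The paper deliberately avoids this (see the remark following Theorem~\ref{thm:construction}): since the fibration of $X_{1,b}$ is non-trivial away from the singular fibers, the semi-flat ansatz is hard to control, so instead the entire problem is pulled back to $X_1$ via the nearest-point projection with respect to the ambient cone metric, the approximate solution and weighted analysis of \cite{Sz19} are reused, and the only new analytic input is a region-by-region estimate of the errors in the complex structure and volume form introduced by the projection (Proposition~\ref{prop:bdecay}), plus a perturbation of the inverse Laplacian. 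Note also that the $A_1$ points of the singular fibers lie in a fixed compact set, so no local $A_1$ gluing is needed; the compact part is absorbed by Hein's perturbation theorem \cite{Hein}. Your route is not obviously wrong, but it would require genuinely new analysis (the singular fibers are non-compact), and you should not present it as a routine adaptation of \cite{Sz19}.

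The serious gap is in the distinguishing argument: the normalizations $|\tilde\lambda|=\lambda$, $|\tilde\mu|=\lambda^2$, $|\tilde\nu_\pm|=\lambda^3$ are asserted, not proved, and they are the entire content of the theorem. Without them your algebra yields only $b'=\tilde\mu^2 b$ with $\tilde\mu\in\mathbf{C}^*$ unconstrained, which cannot distinguish any two nonzero values of $b$ --- indeed the linear automorphism $\Phi_t$ of Example~\ref{exmp:cxa2} maps $X_{1,b}$ biholomorphically onto $X_{1,e^{t/3}b}$, so complex geometry alone sees all $b>0$ as equivalent, and the induced cone automorphism supplied by Donaldson--Sun theory is itself only defined up to this one-parameter family, which does not preserve your normalizations. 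What rules this out in the paper is a metric normalization of the constructed metrics: Corollary~\ref{cor:asymptotics} (inherited by $\omega_{1,b}$ as Proposition~\ref{prop:asymptotics}) shows that $d(0,\cdot)^2/(|z|^2+r^2)\to 1$ with constant exactly $1$ in the specific coordinates cutting out $X_{1,b}$; combined with the exact volume-form identity $\omega_{1,b}^3=\sqrt{-1}\,\Omega\wedge\overline\Omega$, this forces the scaling $c=1$ and $|a_2|=1$ in Proposition~\ref{prop:distinguish}, whence $b=b'$. Establishing that asymptotic normalization requires the detailed Gromov--Hausdorff analysis of Proposition~\ref{tangentconeisx0}; it does not follow formally from adapted bases, which are $L^2$-normalized with respect to the unknown metric and must still be matched to the explicit gluing data. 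Your closing paragraph gestures at this point but does not supply the argument, and without it the proof does not close.
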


One way to understand this phenomenon of nonuniqueness is that these
metrics should correspond to different ways to smooth out the $A_2$
singularity. In particular, each $X_{1,b}$ has a distinct fibration
structure, with distinct singular fiber positions and singularity
types.

In Sections \ref{sec:x1} and \ref{sec:x1b}, we describe our
construction of $\omega_b$ by a gluing technique similar to the one
used in \cite{Sz19}. The main difference in our case is that the
fibration is more complicated, and as a result the approximate
solution is not obvious to write down. A crucial observation is that
in our case, away from the singular fibers and the origin, the metric
should still be modeled on either $\mathbf{C} \times V_0$ or
$\mathbf{C} \times V_1$ depending on the regions. This allows us to
write down an approximate solution on $X_{1,b}$ using the approximate
solution on $X_1$ and the nearest point projection from $X_1$ to
$X_{1,b}$ (outside large compact sets) with respect to a certain cone
metric on the ambient $\mathbf{C}^4$.

In Section~\ref{sec:ds}, we describe our method for distinguishing
these metrics, and conclude the proof of Theorem~\ref{thm:A2}. In
particular, we generalize the application of Donaldson-Sun
theory~\cite{DS17} as seen in \cite{Sz20} to construct special
embeddings of Calabi-Yau metrics on $\mathbf{C}^3$ with tangent cone
$\mathbf{C} \times A_2$ at infinity. We also obtain a normalization of
holomorphic functions from the gluing construction in the previous
sections. Our method for distinguishing these metrics is then a
combination these results. At the end of this paper, we propose a
refinement of a conjecture of Sz\'ekelyhidi~\cite{Sz20}, and discuss
preliminary results as well as some difficulties that arise in this
setting. \newline

\noindent{\bf Acknowledgments.} I would like to thank G\'abor
Sz\'ekelyhidi for the encouragement and constant support over the
years. Thanks also to Lorenzo Foscolo and Yang Li for helpful
discussions. I was supported by Simons Collaboration on Special
Holonomy in Geometry, Analysis, and Physics (\#724071 Jason Lotay).

\section{Weighted analysis on $X_1$}\label{sec:x1}

In this section, we explain mostly without proofs the construction of
the approximate solution on $X_1$, as well as the weighted analysis in
\cite{Sz19}. We will however give a detailed proof of
Proposition~\ref{tangentconeisx0} below, since a consequence of its
proof is a normalization of the holomorphic functions with respect to
the approximate metric (see Corollary~\ref{cor:asymptotics}). This
will be used in Section~\ref{sec:ds}.

\subsection{The approximate solution}

We work in Setup~\ref{setup}. Recall in \cite{Sz19} that there is a
cone metric $\ddb R^2$ on $\mathbf{C}^n$, compatible with the
$\mathbf{C}^*$ action, such that the radial function $R$, when
restricting to $V_0$, is uniformly equivalent to the distance function
$r$ on $V_0$. Using $\ddb R^2$, we can extend $r$ homogeneously to a
function, also called $r$, on $\mathbf{C}^n$. $\ddb r^2$ defines a
K\"ahler metric on $V_1$ (away from a large compact set) which is
asymptotic to the Calabi-Yau cone $V_0$ under the nearest point
projection. By \cite[Theorem~2.4]{CH13} and \cite[Theorem~3.1]{CH13},
there exists a unique complete Calabi-Yau metric $\ddb \phi$ on $V_1$
asymptotic to $\ddb r^2$. In particular, $(V_1, \ddb \phi)$ is
asymptotically conical with cone $V_0$.

On $\mathbf{C}^{n+1}$, define $\rho^2 = |z|^2 + R^2$. This gives a
cone metric on $\mathbf{C}^{n+1}$ compatible with the $\mathbf{C}^*$
action.

Let $\gamma_1(s)$ be a cutoff function satisfying
\begin{align*}
\gamma_1(s) =
\begin{cases}
1 & \text{if }s > 2 \\
0 & \text{if }s < 1.
\end{cases}
\end{align*}
and let $\gamma_2 = 1-\gamma_1$. Define the approximate solution, at
least for $\rho > P$ for sufficiently large $P > 0$, by
\begin{align*}
\omega = \ddbar\left(|z|^2 + \gamma_1(R\rho^{-\alpha})r^2 +
\gamma_2(R\rho^{-\alpha})|z|^{2/d}\phi(z^{-1/d}\cdot \mathbf{x})\right),
\end{align*}
where $\alpha \in (1/d, 1)$ is to be chosen later. Writing
$\psi = \phi-r^2$, we can rewrite $\omega$ as
\begin{align*}
\omega = \ddbar\left( |z|^2 + r^2 +
\gamma_2(R\rho^{-\alpha})|z|^{2/d}\psi(z^{-1/d} \cdot \mathbf{x}) \right).
\end{align*}
So the potential of $\omega$ grows like $\rho^2$. In particular if
$\omega$ is positive definite on $\rho > P$, then we can replace
$\omega$ by a metric on $X_1$ that agrees with $\omega$ on
$\rho > 2P$.

The following shows that for large enough $P$, $\omega$ defines a
K\"ahler metric, and the Ricci potential has good enough decay.

\begin{prop}
\label{prop:adecay}
Fix $\alpha \in (1/d,1)$. The form $\omega$ defines a K\"ahler metric
on the subset of $X_1$ where $\rho > P$, for sufficiently large
$P$. For suitable constants $\kappa, C_i >0$ and weight
$\delta < 2/d$, the Ricci potential $h$ of $\omega$ satisfies, for
large $\rho$,
\begin{align*}
|\nabla^i h|_\omega <
\begin{cases}
C_i \rho^{\delta-2-i} & \text{if } R > \kappa\rho \\
C_i \rho^{\delta}R^{-2-i} & \text{if } R \in (\kappa^{-1}\rho^{1/d}, \kappa \rho) \\
C_i \rho^{\delta-2/d-i/d} & \text{if } R < \kappa^{-1}\rho^{1/d}.
\end{cases}
\end{align*}
If in addition $d > 3$ and $\alpha$ is chosen close to $1$, then we
can even choose $\delta < 0$, i.e. in this case $h$ decays faster than
quadratically away from the singular rays.
\end{prop}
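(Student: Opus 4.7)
The plan is to partition $X_1 \cap \{\rho > P\}$ into three overlapping regions matching the cutoff structure: the cone region $R > 2\rho^\alpha$, where $\omega = \ddbar(|z|^2 + r^2)$; the fiber region $R < \rho^\alpha$, where $\omega = \ddbar(|z|^2 + |z|^{2/d}\phi(z^{-1/d}\cdot \mathbf{x}))$; and the transition annulus $\rho^\alpha \le R \le 2\rho^\alpha$. In each region I would compare $\omega$ to an explicit Ricci-flat model: the cone metric $\omega_0 = \ddbar(|z|^2 + r^2)$ on $X_0 = \mathbf{C} \times V_0$ in the first case, and in the second, the $F_{z}$-rescaling of $\ddbar \phi$, which is genuinely Calabi-Yau on $V_1$. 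The whole argument exploits the $\mathbf{C}^*$-weighted homogeneity to reduce, after scaling, to fixed-scale local computations.

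For positivity, in the cone region I would note that $\ddbar(|z|^2 + r^2)$ is a genuine cone K\"ahler form on $\mathbf{C}^{n+1}$, so its restriction to $X_1$ is positive provided the defining equation gradient $d(z+f)$ is nonzero; since $|df| \sim R^{d-1}$ and $\rho$ is large, nondegeneracy is automatic. In the fiber region, $\omega$ is literally the pullback of a K\"ahler metric on $V_1$ by the biholomorphism $\mathbf{x} \mapsto z^{-1/d}\mathbf{x}$ applied fiberwise, hence positive. In the transition region I would use the asymptotically conical bound $\psi := \phi - r^2 = O(r^{2-\mu})$ from Conlon-Hein, combined with $|z| \lesssim \rho$ and $R \sim \rho^\alpha$, to show that the cutoff term $\gamma_2(R\rho^{-\alpha})|z|^{2/d}\psi(z^{-1/d}\cdot \mathbf{x})$ and its first two derivatives contribute terms small enough to perturb $\omega_0$ without destroying positivity.

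For the Ricci potential $h = -\log(\omega^n/|\Omega|^2)$, where $\Omega$ is the residue form on $X_1$, I would compute in each region separately. In the cone region, $\omega = \omega_0|_{X_1}$, so the only obstruction to Ricci-flatness is that $X_1$ is not $X_0$; parametrizing $X_1$ as the graph $z = -f(\mathbf{x})$ and expanding in powers of $|f|/\rho^2$ gives contributions of weighted size $\rho^{-2+2/d}$, which is where the bound $\delta < 2/d$ originates. In the fiber region, the error arises from the discrepancy between $X_1$ and its fiberwise model $\mathbf{C} \times V_1$ under the $z^{-1/d}$ rescaling, and scales by the weighted homogeneity to the claimed $\rho^{\delta - 2/d - i/d}$. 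In the transition region I would track how the $\gamma'(R\rho^{-\alpha}), \gamma''(R\rho^{-\alpha})$ factors (each carrying a $\rho^{-\alpha}$ from $dR\rho^{-\alpha}$) combine with the asymptotic decay of $\psi$; the gain in $\psi$ beats the cutoff loss, leaving the middle bound $\rho^{\delta} R^{-2-i}$.

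The last statement, that $\delta < 0$ can be arranged for $d > 3$ by pushing $\alpha$ close to $1$, follows because the transition annulus then concentrates near $R \sim \rho$, and the leading contribution to $h$ in that regime involves $|df|^2/\rho^2 \sim R^{2(d-1)}/\rho^{2+2d}$ and similar weighted terms whose effective decay exponent improves monotonically in $d$ and in $\alpha$. The main obstacle will be the transition region: one must keep track of fractional powers $z^{-1/d}$ being differentiated (producing inverse powers of $|z|$), the precise rate $\mu$ from the Conlon-Hein asymptotics, and the two derivatives of the cutoff, and verify that for some admissible $\delta$ the three regional bounds glue consistently. The remaining steps reduce to standard Ricci-potential expansions around explicit Ricci-flat cone and fiber models.
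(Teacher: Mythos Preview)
Your three-region decomposition based on the cutoff $\gamma_1(R\rho^{-\alpha})$ does not match the three cases in the statement, and this is a genuine gap. The statement's cases are determined by the \emph{local scale} of the geometry ($R\sim\rho$, $\rho^{1/d}\ll R\ll\rho$, $R\sim\rho^{1/d}$), not by where the cutoff is active. Since $\alpha\in(1/d,1)$, your ``cone region'' $R>2\rho^\alpha$ straddles both the first and second cases of the statement, and your ``fiber region'' $R<\rho^\alpha$ straddles the second and third. In particular, the middle bound $|\nabla^i h|<C_i\rho^\delta R^{-2-i}$ must be established on a range of $R$ far wider than the cutoff annulus $R\in(\rho^\alpha,2\rho^\alpha)$; most of that range lies where the metric formula is purely $\ddbar(|z|^2+r^2)$ or purely the fiber formula, with no cutoff derivatives present at all. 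Your proposal attributes the $R^{-2-i}$ behaviour entirely to the transition annulus, which cannot be right.

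What is missing is the local rescaling technique. The paper (following \cite{Sz19}, and visible explicitly in the proof of Proposition~\ref{prop:bdecay}) subdivides into \emph{five} regions, and in each one fixes a point, rescales the metric by the appropriate local scale ($D\sim\rho$ in Region~I, $K\sim R$ in Regions~II--IV, $|z_0|^{1/d}\sim\rho^{1/d}$ in Region~V), and then compares the rescaled equation of $X_1$ to the relevant model ($V_0$, or $\mathbf{C}\times V_{K^{-d}z_0}$, or $\mathbf{C}\times V_1$). The $R^{-2-i}$ factors in the middle case arise precisely because after rescaling by $K\sim R$ the error in the defining equation is $O(K^{-d}|z_0|)$ or $O(K^{1-d})$, and unwinding the scaling produces powers of $R$. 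Your graph parametrisation $z=-f(\mathbf{x})$ and expansion in $|f|/\rho^2$ does not capture this, because at points with $R\ll\rho$ the metric $\ddbar(|z|^2+r^2)$ restricted to $X_1$ has injectivity radius $\sim R$, not $\sim\rho$, so derivative bounds must be taken at scale $R$. To repair the argument you would need to insert this pointwise rescaling step inside each of your regions, which effectively reproduces the five-region analysis.
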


Since $\omega$ defines a K\"ahler metric on $X_1 \cap \{ \rho > P \}$,
one can modify the K\"ahler potential so that the new metric is
defined on $X_1$ and coincides with $\omega$ on
$X_1 \cap \{ \rho > 2P \}$, say. This can be done for example using
the ``regularized maximum'' as described in \cite[p.2659]{Sz19}. We
fix a modification of $\omega$ and still call it $\omega$ in the
following.

\subsection{Weighted spaces and tangent cones}

We turn to the definition of weighted spaces. The definition will
account for model geometries in different regions on $X_1$, as
illustrated in the previous proposition. Recall that we want to
perturb the approximate solution $\omega$ to a Calabi-Yau metric on
the set $\{ \rho > A \}$ for sufficiently large $A$. To proceed, we
fix a large $P < A$ such that on $\{ \rho < 2P \}$ we use the usual
$C^{k,\alpha}$ norm. When $\rho > P$ we define the weighted spaces in
terms of the radial distance $\rho$ and the distance to the singular
rays $R$. Define the smooth function
\begin{align*}
w = 
\begin{cases}
1 & \text{if } R > 2\kappa\rho \\
R/(\kappa\rho) & \text{if } R \in (\kappa^{-1}\rho^{1/d}, \kappa\rho) \\
\kappa^{-2}\rho^{1/d-1} & \text{if } R < \frac{1}{2}\kappa^{-1}\rho^{1/d}
\end{cases}
\end{align*}
The three regions in the definition are ``away from singular rays'',
``gluing region'' and ``near singular rays'' in order. Define the
H\"older seminorm as
\[
[T]_{0,\gamma} =
\sup_{\rho(z) > P} \rho(z)^\gamma w(z)^\gamma
\sup_{z' \ne z, z' \in B(z,c)} \frac{|T(z) - T(z')|}{d(z,z')^\gamma}.
\]
Here $c$ is chosen so that $B(z,c)$ has bounded geometry and is
geodesically convex. We use parallel transport along a geodesic to
compare $T(z)$ and $T(z')$. We can now define the weighted spaces
\begin{align*}
\|f\|_{C^{k,\alpha}_{\delta, \tau}} = &\|f\|_{C^{k,\alpha}(\rho < 2P)}
+ \sum_{j=0}^k \sup_{\rho > P} \rho^{-\delta+j}w^{-\tau+j}|\nabla^j f| \\
&+[\rho^{-\delta+k}w^{-\tau+k}\nabla^k f]_{0,\alpha}.
\end{align*}
Alternatively, if we replace $\rho$ by a smoothing of
$\max\{1,\rho\}$, then we can express these weighted norms with
respect to the metric $\rho^{-2}w^{-2}\omega$:
\[
\|f\|_{C^{k,\alpha}_{\delta,\tau}} =
\|\rho^{-\delta}w^{-\tau} f\|_{C^{k,\alpha}_{\rho^{-2}w^{-2}\omega}}.
\]
Using these norms we can define
$C^{k,\alpha}_{\delta,\tau}(X_1, \omega)$. Since we will invert the
Laplacian only on $\rho \ge A$ for $A$ sufficiently large, for $f$
defined on $\rho \ge A$ we define the norms
\[
\|f\|_{C^{k,\alpha}_{\delta,\tau}(\rho^{-1}[A,\infty))} =
\inf_{\hat{f}} \|f\|_{C^{k,\alpha}_{\delta,\tau}(X_1,\omega)},
\]
where the infimum is among all extensions $\hat{f}$ of $f$ on $X_1$.

We record without proof some basic properties of the weighted norms:

\begin{prop}
\label{prop:weightedprop}
The weighted norms we just defined enjoy the following properties:
\begin{itemize}
\item If $f \in C^{k,\alpha}_{a,b}$ and $g \in C^{k,\alpha}_{c,d}$,
  then
  $\|fg\|_{C^{k,\alpha}_{a+c,b+d}} \le \|f\|_{C^{k,\alpha}_{a,b}} \le
  \|g\|_{C^{k,\alpha}_{c,d}}.$
\item If $a < c$, then
$\|f\|_{C^{k,\alpha}_{a,b}} \ge \|f\|_{C^{k,\alpha}_{c,b}}$, and
consequently $C^{k,\alpha}_{a,b} \subset C^{k,\alpha}_{c,b}$. This is
because $\rho > P > 1$.
\item If $b < d$, then
$\|f\|_{C^{k,\alpha}_{a,b}} \le \|f\|_{C^{k,\alpha}_{c,d}}$, and
consequently $C^{k,\alpha}_{a,b} \supset C^{k,\alpha}_{a,d}$. This is
because $w \le 1$.
\end{itemize}
\end{prop}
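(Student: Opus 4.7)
The proposition asserts three elementary properties of the weighted norms, and all three should follow from direct manipulation of the definition
\[
\|f\|_{C^{k,\alpha}_{\delta,\tau}} = \|f\|_{C^{k,\alpha}(\rho < 2P)} + \sum_{j=0}^{k} \sup_{\rho > P} \rho^{-\delta+j} w^{-\tau+j} |\nabla^j f| + [\rho^{-\delta+k} w^{-\tau+k} \nabla^k f]_{0,\alpha}
\]
combined with the two pointwise facts $\rho > P > 1$ and $0 < w \le 1$ on the weighted region.

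For the product property, the plan is to apply the Leibniz rule $\nabla^j(fg) = \sum_{i=0}^{j} \binom{j}{i}\, \nabla^i f \otimes \nabla^{j-i} g$ and split the weight factor as
\[
\rho^{-(a+c)+j} w^{-(b+d)+j} = \bigl(\rho^{-a+i} w^{-b+i}\bigr) \cdot \bigl(\rho^{-c+(j-i)} w^{-d+(j-i)}\bigr),
\]
so that each term of $\nabla^j(fg)$ is bounded in sup-norm by the product of the relevant pieces of the two weighted norms. The H\"older seminorm part is treated the same way: after applying Leibniz to $\nabla^k(fg)$ and using the triangle inequality in the usual ``fix one factor, vary the other'' pattern, one notes that within the ball $B(z,c)$ of bounded geometry the weight functions $\rho$ and $w$ are comparable up to uniform constants, so the single factor $\rho^\gamma w^\gamma$ in the seminorm splits compatibly. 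Absorbing the finitely many binomial coefficients and comparison constants yields the desired product inequality (the second ``$\le$'' between the two factor norms in the statement is evidently a typo for $\cdot$).

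For the monotonicity statements, both are pointwise comparisons of the weights. If $a < c$, then $-a + j > -c + j$, and since $\rho > 1$ we obtain $\rho^{-a+j} \ge \rho^{-c+j}$; applying this termwise in the weighted sup and in the H\"older seminorm gives $\|f\|_{C^{k,\alpha}_{a,b}} \ge \|f\|_{C^{k,\alpha}_{c,b}}$ and the inclusion $C^{k,\alpha}_{a,b} \subset C^{k,\alpha}_{c,b}$. The third item is symmetric, but the direction of the inequality flips because $w \le 1$: if $b < d$, then $-b + j > -d + j$ and hence $w^{-b+j} \le w^{-d+j}$ pointwise, so $\|f\|_{C^{k,\alpha}_{a,b}} \le \|f\|_{C^{k,\alpha}_{a,d}}$ (reading the $c$ on the right of the statement as a typo for $a$, consistent with the stated inclusion $C^{k,\alpha}_{a,b} \supset C^{k,\alpha}_{a,d}$).

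I do not anticipate any substantive obstacle. The only slightly delicate point is the H\"older seminorm of a product, where one must verify that the weights at $z$ and $z'$ inside $B(z,c)$ are comparable so that the single prefactor $\rho^\gamma w^\gamma$ can be replaced, up to a uniform constant, by the product of the two factor weights evaluated separately at $z$; this is immediate from the bounded-geometry choice of the radius $c$.
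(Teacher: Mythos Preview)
The paper explicitly records this proposition \emph{without proof} (``We record without proof some basic properties of the weighted norms''), so there is nothing to compare against. Your argument is the standard one and is correct, including your identification of the two typos in the statement (the second ``$\le$'' in the product inequality should be a multiplication, and the ``$c$'' on the right-hand side of the third item should be ``$a$'').
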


We can now use the weighted spaces to compare the geometry of $X_1$
with model spaces in different regions. Write $g, g_0$ for the
Riemannian metrics of $\omega, \omega_0$, respectively (recall that
$\omega_0$ is the cone metric on $X_0$). First we consider the region
\[
\mathcal{U} = \{ \rho > A, R> \Lambda\rho^{1/d}\} \cap X_1,
\]
for large $A, \Lambda$, and let
\[
G: \mathcal{U} \to X_0
\]
be the nearest point projection with respect to the cone metric
$\ddbar (|z|^2 + R^2)$ on $\mathbf{C}^{n+1}$. Note that we have
\[
G(z,x) = (z,x')
\]
where $x'$ is the nearest point projection of $x \in \mathbf{C}^n$ with
respect to the cone metric $\ddbar R^2$ on $\mathbf{C}^n$.

\begin{prop}
\label{awaysingular}
Given any $\epsilon > 0$ we can choose $\Lambda > \Lambda(\epsilon)$,
and $A > A(\epsilon)$ sufficiently large so that on $\mathcal{U}$ we have
\[
|\nabla^i(G^*g_0 - g)|_g < \epsilon w^{-i}\rho^{-i}.
\]
for $i\le k+1$. In particular, in terms of weighted spaces we have
\[
\|G^*g_0 - g\|_{C^{k,\alpha}_{0,0}} < \epsilon.
\]
\end{prop}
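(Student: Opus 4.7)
My plan is to split $\mathcal{U}$ according to the value of the cutoff $\gamma_2(R\rho^{-\alpha})$ and treat each subregion separately. Since $\alpha < 1$, the cutoff $\gamma_2$ vanishes on $\{R > 2\rho^\alpha\}$ and equals $1$ on $\{R < \rho^\alpha\}$, so the essential estimates happen in two regimes.

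On the outer piece $\mathcal{U} \cap \{R > 2\rho^\alpha\}$, the approximate solution reduces to $\omega = \ddbar(|z|^2 + r^2)$ while $G^*\omega_0 = \ddbar(|z|^2 + (R\circ G)^2)$. Since $r$ is the $\mathbf{C}^*$-equivariant extension of the cone distance on $V_0$, the function $r - R\circ G$ is controlled by the Euclidean distance from $\mathbf{x}$ to $V_0$. On $X_1 = \{z+f=0\}$ this distance is comparable to $|f(\mathbf{x})|/R^{d-1} = |z|/R^{d-1}$, which, combined with $|z| \le \rho$ and $R > 2\rho^\alpha$, gives decay of order $\rho^{1-d\alpha}$. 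Higher derivative bounds follow by direct differentiation together with the smoothness of the cone structure away from $V_0$.

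On the inner piece $\mathcal{U} \cap \{R < 2\rho^\alpha\}$, I would rescale. At a point $p = (z_0, \mathbf{x}_0)$, apply the $\mathbf{C}^*$ action $F_t$ with $t = z_0^{-1/d}$: the image $F_t^{-1}X_1$ is exactly $V_1$, and the pullback of $\omega$ becomes, up to horizontal corrections, $|z_0|^{2/d}$ times $\ddbar \phi$ on $V_1$. The hypothesis $R > \Lambda\rho^{1/d}$ becomes $\tilde R > c\Lambda$ on $V_1$, pushing the rescaled point far from its compact core. By the asymptotic conicality of $(V_1, \ddbar\phi)$ from \cite{CH13},
\[
|\nabla_\phi^i(\ddbar\phi - G_{V_0}^*\ddbar R^2)|_\phi = O(\tilde R^{-\mu - i})
\]
for some $\mu > 0$, so choosing $\Lambda$ large makes this uniformly smaller than $\epsilon$. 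Unwinding the rescaling, $\tilde R^{-1} \sim \rho^{1/d}/R \sim 1/(\rho w)$ in the gluing region, which matches the pointwise estimate claimed.

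The main technical obstacle will be controlling derivatives in the $z$-direction, since $\omega$ involves $|z|^{2/d}\psi(z^{-1/d}\cdot \mathbf{x})$. Differentiating in $z$ produces extra $|z|^{-1}$ factors paired with $\mathbf{x}$-derivatives of $\psi$; using the $\mathbf{C}^*$-equivariance of the Conlon-Hein metric, these combine to give the required $\rho^{-1}w^{-1}$ gain per derivative. The transition annulus $\rho^\alpha < R < 2\rho^\alpha$ contributes commutator terms $\nabla\gamma_2 \cdot |z|^{2/d}\psi$, controlled because $\alpha > 1/d$ forces $\psi$ to have already decayed on that scale. Assembling all estimates and taking $\Lambda, A$ large yields both the pointwise bound and the weighted $C^{k,\alpha}_{0,0}$ estimate.
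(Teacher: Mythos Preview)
The paper does not supply its own proof of this proposition; it is quoted from \cite{Sz19}, and the method there is the region-by-region rescaling visible in this paper in the proof of Proposition~\ref{prop:bdecay}. Your plan---split $\mathcal{U}$ by the cutoff $\gamma_2(R\rho^{-\alpha})$, on the outer piece compare the common potential $|z|^2+r^2$ restricted to $X_1$ versus $X_0$, and on the inner piece rescale by $|z_0|^{1/d}$ and invoke the asymptotically conical decay of $(V_1,\ddbar\phi)$ in the region $\tilde R > c\Lambda$---is exactly that method, collapsed from five regions to two.

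Two technical points deserve tightening. First, you write $G^*\omega_0 = \ddbar(|z|^2 + (R\circ G)^2)$: the cone potential is $|z|^2+r^2$, not $|z|^2+R^2$, and since $G$ is not holomorphic, pullback does not commute with $\ddbar$. The honest comparison is of the Riemannian tensors $g$ and $G^*g_0$, and what drives it is that after rescaling by $D^{-1}$ (outer) or $K^{-1}$ (intermediate), both $X_1$ and its $G$-image in $X_0$ converge smoothly to the same limit hypersurface, with error $O(D^{1-d})$ or $O(K^{-d}\rho)$; this absorbs both the non-holomorphicity of $G$ and the potential difference in one step. Second, ``$F_t^{-1}X_1$ is exactly $V_1$'' is not correct: with $t=z_0^{-1/d}$ the rescaled hypersurface still lives in $\mathbf{C}^{n+1}$. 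What you need is the shifted rescaling $\hat z = z_0^{-1/d}(z-z_0)$, $\hat{\mathbf{x}} = z_0^{-1/d}\cdot\mathbf{x}$, under which $X_1$ becomes $z_0^{1/d-1}\hat z + 1 + f(\hat{\mathbf{x}})=0$, i.e.\ a perturbation of $\mathbf{C}\times V_1$ of size $|z_0|^{1/d-1}$; combined with the AC estimate $|\nabla^i(g_\phi - G_{V_0}^*g_{r^2})| = O(\tilde R^{-\mu-i})$ for $\tilde R > c\Lambda$, this gives the claimed bound after unwinding $\tilde R^{-1}\sim (\rho w)^{-1}$, as you say.
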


Next we consider the region where $\rho > A$ but
$R < \Lambda \rho^{1/d}$, i.e. we are close to the singular ray. Fix
$z_0 \in \mathbf{C}$ and a large constant $B>0$. Define
\[
\mathcal{V} = \{ |z-z_0| < B|z_0|^{1/d},\: R < \Lambda \rho^{1/d},\: \rho > A \} \cap X_1.
\]
We will use regions in the form of $\mathcal{V}$ to cover the neighborhood of
the singular ray. We change the coordinates as follows:
\begin{equation}
\hat{x} = z_0^{-1/d} \cdot x, \:\:\: \hat{z} = z_0^{-1/d}(z-z_0).
\end{equation}

Define $\hat{R} = |z_0|^{-1/d}R$, and let
$\hat{\zeta} = \max\{1,\hat{R}\}$. Then $(\hat{z},\hat{x})$ satisfies the equation
\[
z_0^{1/d-1}\hat{z} + 1 + f(\hat{x}) = 0,
\]
and $|\hat{z}| < B, |\hat{R}| < C\Lambda$ for some fixed constant $C$
(since $|z|\sim \rho$). In terms of the new coordinates, we define the
map
\[
H: \mathcal{V} \to \mathbf{C} \times V_1
\]
by $H(\hat{z},\hat{x}) = (\hat{z}, \hat{x}')$, where $\hat{x}'$ is the
nearest point projection of $\hat{x}$ onto $V_1$ with respect to the
ambient cone metric.

\begin{prop}
\label{nearsingular}
Given $\epsilon, \Lambda > 0$, if $A > A(\epsilon,\Lambda, B)$, then we
have
\[
|\nabla^i(H^*g_{\mathbf{C}\times V_1}-|z_0|^{-2/d}g)|_{|z_0|^{-2/d}g} < \epsilon \hat{\zeta}^{-i}
\]
for $i \le k+1$. In terms of weighted spaces we have
\[
\||z_0|^{2/d}H^*g_{\mathbf{C}\times V_1}-g\|_{C^{k,\alpha}_{0,0}} < \epsilon.
\]
\end{prop}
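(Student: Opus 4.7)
The plan is to pull back everything by the rescaling map $\Phi(\hat z,\hat{\mathbf x})=(z_0+z_0^{1/d}\hat z,\,z_0^{1/d}\cdot\hat{\mathbf x})$, under which $\Phi^*\ddbar(|z|^2+R^2)=|z_0|^{2/d}\ddbar(|z|^2+R^2)$ by the weight-$2$ homogeneity of the ambient cone potential. Setting $\hat g:=|z_0|^{-2/d}\Phi^*g$ and $\hat X:=\Phi^{-1}X_1$ (the latter cut out by $z_0^{1/d-1}\hat z+1+f(\hat{\mathbf x})=0$), the statement becomes $|\nabla^i((H\circ\Phi)^*g_{\mathbf{C}\times V_1}-\hat g)|_{\hat g}<\epsilon\hat\zeta^{-i}$ on the bounded region $\hat{\mathcal V}=\{|\hat z|<B,\,\hat R<C\Lambda\}$, where we expect $\hat g$ to be a small perturbation of $g_{\mathbf{C}\times V_1}$ as $|z_0|\to\infty$.

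First I would verify that $\gamma_2\equiv 1$ on $\mathcal V$: since $R<\Lambda\rho^{1/d}$ and $\alpha>1/d$, we have $R\rho^{-\alpha}\le\Lambda\rho^{1/d-\alpha}\to 0$ for $A$ large. The potential of $\omega$ is then $u=|z|^2+r^2+|z|^{2/d}\psi(z^{-1/d}\cdot\mathbf x)$ with $\psi=\phi-r^2$. Setting $\eta=z_0^{1/d-1}\hat z$, so $|\eta|\le B|z_0|^{1/d-1}$, and using the homogeneity of $r^2$, a direct computation gives
\[
|z_0|^{-2/d}\Phi^*u=P(\hat z,z_0)+|\hat z|^2+\phi(\hat{\mathbf x})+E,
\]
where $P$ is pluriharmonic in $\hat z$ (hence $\ddbar$-closed) and $E=|1+\eta|^{2/d}\psi((1+\eta)^{-1/d}\cdot\hat{\mathbf x})-\psi(\hat{\mathbf x})$ is a Taylor remainder of order $|\eta|$. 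Since $|\hat z|<B$ and since the Tian-Yau asymptotics of $\psi$ from \cite{CH13} give uniform pointwise bounds on $\psi$ and its $g_{V_1}$-covariant derivatives in the bounded region $\hat R<C\Lambda$, one obtains $|\nabla_{\hat g}^i E|\lesssim|z_0|^{1/d-1}\hat\zeta^{-i}$.

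Next, $\hat X$ is a normal graph over $\mathbf{C}\times V_1$ of displacement $O(|z_0|^{1/d-1})$, so the implicit function theorem (applied on unit $\hat g$-balls, which at scale $\hat\zeta$ carry bounded geometry by the asymptotic conicality of $V_1$) shows that $H\circ\Phi$ is $O(|z_0|^{1/d-1})$-close to the identity, with matching weighted derivative bounds. Putting the two ingredients together, the K\"ahler potential of $(H\circ\Phi)^*g_{\mathbf{C}\times V_1}$ differs from that of $\hat g$ by $O(|z_0|^{1/d-1})$ in the weighted $C^{k+2,\alpha}_{\hat\zeta}$ sense, so the same holds for their $\ddbar$'s in $C^{k,\alpha}_{\hat\zeta}$. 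Finally, $\rho>A$ together with $|z-z_0|<B|z_0|^{1/d}$ forces $|z_0|\gtrsim A$, so choosing $A(\epsilon,\Lambda,B)$ large enough makes $|z_0|^{1/d-1}<\epsilon$. The main obstacle will be the weighted bookkeeping: verifying that every covariant derivative in $\hat g$ really produces the correct $\hat\zeta^{-i}$ factor, both for the Taylor remainder $E$ (through the asymptotic decay of $\psi$) and for the graph correction from $H\circ\Phi$ (through rescaling to unit scale $\hat\zeta$ where $V_1$ has bounded geometry).
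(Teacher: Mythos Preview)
The paper does not prove this proposition: Section~2 explicitly states that it recalls the weighted analysis from \cite{Sz19} ``mostly without proofs,'' and Proposition~\ref{nearsingular} is one of the results stated without argument. So there is no in-paper proof to compare against.

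That said, your outline is correct and is the standard argument one would expect in \cite{Sz19}. The change of variables $(\hat z,\hat{\mathbf x})$ is exactly the one the paper introduces just before the statement; your verification that $\gamma_2\equiv 1$ on $\mathcal V$ (using $\alpha>1/d$) is the right first step; and your decomposition of $|z_0|^{-2/d}\Phi^*u$ into $|\hat z|^2+\phi(\hat{\mathbf x})$ plus a pluriharmonic term plus an $O(|z_0|^{1/d-1})$ Taylor remainder in $\eta=z_0^{1/d-1}\hat z$ is precisely how the error is organized. The normal-graph estimate for $H\circ\Phi$ is likewise the standard way to control the nearest-point projection. On the $\hat\zeta$-bookkeeping you flag at the end: since $1\le\hat\zeta\le C\Lambda$ on $\hat{\mathcal V}$, the weighted and unweighted $C^k$ bounds differ only by constants depending on $\Lambda$, which is why the threshold $A$ is permitted to depend on $\Lambda$ and $B$. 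No gap.
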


From the above two propositions we have the following:

\begin{prop}
\label{tangentconeisx0}
Let $\epsilon > 0$. If $D$ is sufficiently large, then there are
$(D\epsilon)$-Gromov-Hausdorff approximations between the annular
regions
\[
X_1^D = (X_1,\omega) \cap \{D^{1/2} < \rho < D \}
\]
and
\[
X_0^D = (X_0, \omega_0) \cap \{D^{1/2} < \rho < D \}
\]
Recall that $X_0 = \mathbf{C} \times V_0$ is equipped with the product
metric $\omega_0 = \ddbar(|z|^2 + r^2)$. Consequently, the tangent
cone of $(X_1,\omega)$ at infinity is $(X_0,\omega_0)$.
\end{prop}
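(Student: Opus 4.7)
The plan is to construct an explicit map $F : X_1^D \to X_0^D$ piecewise, using the two nearest-point projections already provided, and then verify the Gromov-Hausdorff inequalities directly. Split $X_1^D$ into an ``away'' region $\mathcal{U}_D = X_1^D \cap \{R > \Lambda \rho^{1/d}\}$ and a ``near singular ray'' region $\mathcal{W}_D = X_1^D \cap \{R < 2\Lambda \rho^{1/d}\}$, overlapping in the annular transition $\{\Lambda \rho^{1/d} < R < 2\Lambda\rho^{1/d}\}$, where $\Lambda$ will be chosen (depending only on $\epsilon$) large enough to apply Propositions~\ref{awaysingular} and \ref{nearsingular}. The same decomposition is also applied to $X_0^D$.

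On $\mathcal{U}_D$, I would simply set $F = G$. Proposition~\ref{awaysingular} gives $|G^* g_0 - g|_g < \epsilon$ pointwise, so $G$ is a $(1+C\epsilon)$-bilipschitz diffeomorphism from $\mathcal{U}_D$ onto its image in $X_0$. Since any two points of $\mathcal{U}_D$ can be joined by a path in $X_1^D$ of length at most the diameter of the annulus (which is of order $D$), the length distortion produces an error of at most $C \epsilon D$ in the pairwise distances. Near surjectivity onto the ``away'' part of $X_0^D$ is automatic from $G$ being a near diffeomorphism after possibly enlarging the inner cutoff in $\rho$ by a controlled factor.

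On $\mathcal{W}_D$ the key point is that it has small transverse diameter. For each $z_0$ with $|z_0| \sim \rho \in [D^{1/2}, D]$, Proposition~\ref{nearsingular} says that after rescaling the metric by $|z_0|^{-2/d}$, the chart $\mathcal{V}$ is $\epsilon$-close to the fixed bounded region $\{|\hat z| < B,\ \hat R < C\Lambda\} \subset \mathbf{C} \times V_1$. Since $V_1$ is complete with bounded geometry on bounded sets, this rescaled region has diameter of order $1$, so $\mathcal{V}$ itself has diameter $\lesssim |z_0|^{1/d} \lesssim D^{1/d}$. I would therefore define $F|_{\mathcal{W}_D}(z, \mathbf{x}) = (z, 0)$, collapsing to the singular axis $\mathbf{C} \times \{0\} \subset X_0$. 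On the overlap the two definitions disagree by at most $R + \mathrm{dist}(G(z,\mathbf x), (z,0)) \lesssim \Lambda \rho^{1/d} \lesssim D^{1/d}$, so any choice of patching (e.g., using the region $R < 3\Lambda\rho^{1/d}/2$ to cut over) gives a well-defined $F$ with the same error bounds.

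Verifying the GH inequalities splits into three cases. For two points in $\mathcal{U}_D$, the bilipschitz bound on $G$ gives distortion at most $C\epsilon D$. For two points in $\mathcal{W}_D$ lying in a common $\mathcal{V}$, both sides have diameter $\lesssim D^{1/d}$, contributing at most that to the distortion, which for $d > 1$ is dominated by $D\epsilon$ as soon as $D > (\Lambda/\epsilon)^{d/(d-1)}$. For points in different $\mathcal{V}$'s or mixed pairs, I would decompose an almost-geodesic in $X_1^D$ (or $X_0^D$) at the interface $R \sim \Lambda \rho^{1/d}$ and add the $\mathcal{U}_D$ and $\mathcal{W}_D$ errors, which remain bounded by $C D \epsilon$. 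Near surjectivity onto the tube $\{R < \Lambda\rho^{1/d}\} \cap X_0^D$ holds because the axis segment $F(\mathcal{W}_D)$ is $D^{1/d}$-dense in it. The tangent cone statement then follows by rescaling: the map $F$ provides an $\epsilon$-GH approximation between $(X_1, D^{-2}g)$ and $(X_0, D^{-2}\omega_0) \cong (X_0, \omega_0)$ on the annulus $\{\rho \in [D^{-1/2}, 1]\}$, and letting $D \to \infty$ exhausts $(X_0,\omega_0)$, identifying it as the tangent cone at infinity. The main obstacle is the mixed-pair distance check across the interface, which forces the quantitative choice of $\Lambda$, $A$ and the threshold in $D$, and also forces use of $d > 1$ so that $D^{1/d}$ is genuinely smaller than $D$.
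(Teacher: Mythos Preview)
Your approach is essentially identical to the paper's: the same decomposition into away/near-singular regions, the same choice of $F$ (nearest-point projection $G$ on $\mathcal{U}_D$, projection to the singular axis on $\mathcal{W}_D$), the same use of the transverse diameter bound $D^{1/d} \ll D\epsilon$, and the same geodesic-decomposition at the interface for mixed pairs. The one point the paper treats more explicitly is the lower bound $d_{X_1^D}(x_1,x_2) \ge d_{X_0^D}(G(x_1),G(x_2)) - CD\epsilon$ for two points $x_1,x_2 \in \mathcal{U}_D$ whose minimizing geodesic may dip into $\mathcal{W}_D$; your bilipschitz remark only covers paths staying in $\mathcal{U}_D$, so you should route that case through your interface-decomposition argument as well.
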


This is slightly different from Proposition~9 in \cite{Sz19}. Since
the above result is crucial for obtaining the asymptotic behavior of
the distance function of $\omega$, we give a detailed proof here.

\begin{proof}[Proof of Proposition~\ref{tangentconeisx0}]
Given $\epsilon > 0$, the goal is to construct a
$(D\epsilon)$-Gromov-Hausdorff approximation $G: X_1^D \to X_0^D$. Let
$\Lambda > 0$. Write $S_\Lambda = \{ R < \Lambda \rho^{1/d}
\}$. Recall that $S_\Lambda$ denotes a region that is close to the
singular ray of $X_0$. Then we can decompose $X_1^D$ into
$X_1^D \setminus S_\Lambda$ and $X_1^D \cap S_{2\Lambda}$.

First we work on $X_1^D \setminus S_\Lambda$. Recall from
Proposition~\ref{awaysingular} that once $\Lambda$ is sufficiently
large, the nearest point projection
$G: X_1^D \setminus S_\Lambda \to X_0^D$ is a diffeomorphism onto its
image, and the error in the metric is $|g - G^*g_0|_g < \epsilon$. Let
$x_1, x_2 \in X_1^D \setminus S_\Lambda$, and let $\gamma$ be a curve
in $X_1^D \setminus S_\Lambda$ connecting $x_1$ and $x_2$. Then the
error in the length is given by
\begin{equation}
\label{eq:length}
|\mathrm{length}_g(\gamma)-\mathrm{length}_{g_0}(\gamma)| \le \mathrm{length}_{g_0}(\gamma)\epsilon.
\end{equation}
It follows that
\begin{align*}
d_{X_1^D}(x_1,x_2) &\le d_{X_0^D}(G(x_1),G(x_2))(1+\epsilon) \\
&\le d_{X_0^D}(G(x_1),G(x_2)) + 2D\epsilon.
\end{align*}
The second inequality uses the fact that $X_0$ is a cone. To get the
reverse inequality, we can use \eqref{eq:length} again and get
\[
(1-\epsilon)d_{X_0^D}(G(x_1),G(x_2)) \le \mathrm{length}_g(\gamma).
\]
However, we cannot yet take the infimum of the right hand side among
all curves connecting $x_1$ and $x_2$, as the minimal geodesic
connecting $x_1$ and $x_2$ may pass through $X_1^D \cap
S_{\Lambda}$. To $d_{X_1^D}(x_1,x_2)$ is not too much smaller than the
right hand side, we turn to the study on $X_1^D \cap S_{2\Lambda}$.

On $X_1^D \cap S_{2\Lambda}$, we define $G: X_1^D \to X_0^D$ by the
projection
\[
  \mathrm{pr}_1: X_1^D \subset \mathbf{C} \times \mathbf{C}^n \to
  \mathbf{C} \subset X_0
\]
onto the singular ray of $X_0$. Proposition~\ref{nearsingular} says
that there is a map
$H: X_0^D \cap S_{2\Lambda} \to \mathbf{C} \times V_1$ with
$\mathrm{pr}_1 \circ H = \mathrm{pr}_1$ such that
$|g - H^*g_{\mathbf{C}\times V_1}|_g \le \epsilon$. Consequently, for
$x_1, x_2$ in this region, any curve $\gamma$ connecting $x_1$ and
$x_2$ satisfies
\[
\mathrm{length}_g(\gamma) \ge (1-\epsilon)\mathrm{length}_{\mathbf{C}} (G\circ \gamma)
\ge (1-\epsilon)d_{\mathbf{C}}(G(x_1),G(x_2)).
\]
To take the infimum of the left hand side, note that the shortest
curve connecting $x_1$ and $x_2$ in $X_1^D$ will remain in the region
$S_{2\Lambda}$, since on the ``annular region''
$S_{2\Lambda}\setminus S_\Lambda$ the metric can be made arbitrarily
close to the cone metric $\omega_0$ by letting $\Lambda$ and $D$ be
sufficiently large. So we have
\begin{equation}
d_{X_1^D}(x_1,x_2) \ge d_{\mathbf{C}}(G(x_1),G(x_2))-2D\epsilon.
\end{equation}

To get the reverse inequality, we write
$H(x_1) = (z_1, p_1), H(x_2) = (z_2, p_2)$ with $z_i \in \mathbf{C}$ and
$p_i \in V_1$. From the error in the metric we get
\begin{align*}
d_{X_1^D}(x_1,x_2) &\le d_{\mathbf{C} \times V_1}(H(x_1), H(x_2)) (1+ \epsilon) \\
&\le (d_{\mathbf{C}}(z_1, z_2) + d_{V_1}(p_1,p_2))(1+\epsilon) \\
&\le (d_{\mathbf{C}}(z_1, z_2) + d_{V_1}(o, p_1)) + d_{V_1}(o,p_2))(1+\epsilon).
\end{align*}
Here the second inequality follows from the Pythagorean theorem, and
$o$ is a fixed point in $V_1$. Since $d_{V_1}(o, \cdot)$ is equivalent
to $R$, we can estimate
\begin{align}
\label{eq:epsilondense}
d_{V_1}(o,p_1)) \le CR \le C\Lambda D^{1/d-1} D \ll D\epsilon
\end{align}
by choosing $D$ sufficiently large. We conclude that
\begin{align}
\label{eq:singularraydistance}
d_{X_1^D}(x_1,x_2) \le d_{\mathbf{C}}(z_1, z_2) + 2D\epsilon.
\end{align}

We now come back to the region $X_1^D \cap S_\Lambda$. Again let
$x_1, x_2 \in X_1^D \cap S_\Lambda$. Let $\gamma$ be the shortest
curve in $X_1^D$ connecting $x_1$ and $x_2$. Let $x_1'$ be the first
point of $\gamma$ entering the region $S_{\Lambda}$ and let $x_2'$ be
the last point exiting $S_{\Lambda}$. If $\gamma_1$ is the shortest
curve connecting $x_1, x_1'$, then
\begin{align*}
d_{X_1^D}(x_1,x_1') = \mathrm{length}_g(\gamma_1) \ge d_{X_0^D}(G(x_1),G(x_1')) - D\epsilon
\end{align*}
by \eqref{eq:length}. The similar inequality holds for
$d_{X_1^D}(x_2, x_2')$. We then have
\begin{align*}
d_{X_1^D}(x_1,x_2) &= d_{X_1^D}(x_1,x_1') + d_{X_1^D}(x_2,x_2') + d_{X_1^D}(x_1',x_2') \\
&\ge (d_{X_0^D}(G(x_1), G(x_1'))-D\epsilon) \\
&+ (d_{X_0^D}(G(x_2), G(x_2'))-D\epsilon) \\
&+ (d_{X_0^D}(G(x_1'), G(x_2'))-2D\epsilon) \\
&\ge d_{X_0^D}(G(x_1), G(x_2)) - 4D\epsilon
\end{align*}
using the triangle inequality and \eqref{eq:singularraydistance}.

Finally, $G(X_1^D)$ is clearly $(D\epsilon)$-dense away from the
singular ray. That $G(X_1^D)$ is $(D\epsilon)$-dense near the singular
ray follows from the estimate \eqref{eq:epsilondense}. To get the
inverse Gromov-Hausdorff approximation, away from the singular ray we
can use the nearest point projection to map into $X_1^D$. At a point
$(z, p) \in X_0$ near the singular ray, we can first map it to
$(z, o)$, where $o \in V_1$ is a fixed point, and then map $(z,o)$
into $X_1^D$ using $H^{-1}$.
\end{proof}

The following corollary will be useful in Section~\ref{sec:ds}.

\begin{cor}
\label{cor:asymptotics}
Let $d$ denote the distance function of $(X_1,\omega)$ and let
$o \in X_1$ be a fixed point. Then $d(o, \cdot)$ is uniformly
equivalent to $\rho$. Moreover, we have
\[
\lim_{\rho(x) \to \infty} \frac{d(o, x)^2}{|z|^2+r^2} = 1.
\]
\end{cor}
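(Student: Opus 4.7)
The plan is to deduce both statements from Proposition~\ref{tangentconeisx0}, using that on the tangent cone $X_0 = \mathbf{C}\times V_0$ the Riemannian distance from the vertex to $(z,v)$ is exactly $\sqrt{|z|^2+r(v)^2}$ (with $r$ the cone distance on $V_0$, extended homogeneously to $\mathbf{C}^n$ as in Section~\ref{sec:x1}), and that under the near-isometries $G$ of Proposition~\ref{awaysingular} and $H$ of Proposition~\ref{nearsingular} this quantity matches $\sqrt{|z|^2+r^2}$ on $X_1$ up to lower-order terms.

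For the uniform equivalence, the compact region $\{\rho\le 2P\}$ is trivial. For $\rho$ large, the bound $|\nabla \rho|_\omega\le C$ -- which may be read off from the explicit form of $\omega$, whose K\"ahler potential is $|z|^2+r^2$ modulo the decaying piece $\gamma_2(R\rho^{-\alpha})|z|^{2/d}\psi(z^{-1/d}\cdot\mathbf{x})$ -- yields $\rho\le C\, d(o,\cdot)+C'$. The reverse bound $d(o,\cdot)\le C\rho$ is obtained by constructing approximately radial paths: one lifts radial geodesics of $X_0$ back to $X_1$ via $G$ (away from the singular ray) and via $H$ (near the singular ray), and these lifts have length $O(\rho)$.

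For the sharp limit, fix $\epsilon>0$ and $x\in X_1$ with $D:=\rho(x)$ large. Applying Proposition~\ref{tangentconeisx0} with parameter $2D$ yields a $(2D\epsilon)$-Gromov-Hausdorff approximation between $X_1^{2D}$ and $X_0^{2D}$. Pick $o'\in X_1$ with $\rho(o')=D^{1/2}$; by the uniform equivalence, $d(o,o')=O(D^{1/2})$. Away from the singular ray, $G(x)=(z(x),x')$ is a nearest-point projection with $d_{X_0}(0,G(x))=\sqrt{|z(x)|^2+r(x')^2}$, and $d_{X_0}(0,G(o'))=O(D^{1/2})$. The GH approximation combined with the cone triangle inequality in $X_0$ gives $d(o',x)=\sqrt{|z(x)|^2+r(x')^2}+O(D\epsilon)+O(D^{1/2})$, hence $d(o,x)/\sqrt{|z(x)|^2+r(x')^2}=1+O(\epsilon)+O(D^{-1/2})$. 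The asymptotic closeness of $V_1$ and $V_0$ under the nearest-point projection gives $r(x)=r(x')(1+o(1))$ as $\rho(x)\to\infty$, converting this into the desired limit in this regime.

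For $x$ in the singular-ray region $\{R<\Lambda\rho^{1/d}\}$ one uses $H$ from Proposition~\ref{nearsingular} instead: here $r\ll|z|$, so $|z|^2+r^2\sim|z|^2\sim\rho^2$, and one can reach $x$ from $o$ by first traveling along the singular ray of $X_1$ -- which $H$ identifies with the $\mathbf{C}$-factor of $\mathbf{C}\times V_1$ -- and then absorbing the small $V_1$-displacement into a bounded error. The main obstacle is the uniform bookkeeping of error terms across these regimes, in particular ensuring that contributions from the base-point piece $\{\rho<D^{1/2}\}$, the boundary between the two regions, and the discrepancy between $r$ on $V_1$ and $r$ on $V_0$ are all $o(\sqrt{|z|^2+r^2})$ as $\rho\to\infty$.
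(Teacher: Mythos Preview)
Your proposal is correct and follows essentially the same strategy as the paper: both pass through Proposition~\ref{tangentconeisx0}, split off a piece at scale $D^{1/2}$, and reduce the remaining distance to the cone geometry of $X_0$. Two organizational differences are worth noting. First, for the uniform equivalence $d(o,\cdot)\sim\rho$, the paper simply concatenates the annular Gromov--Hausdorff approximations of Proposition~\ref{tangentconeisx0} over dyadic scales $(2^i,2^{2i})$, which is cleaner than your gradient bound plus lifted-path argument and avoids having to justify $|\nabla\rho|_\omega\le C$ separately. Second, for the sharp limit, the paper chooses the intermediate point $x'$ on the minimal geodesic from $o$ to $x$ with $\rho(x')=D^{1/2}$ (so that $d(o,x)=d(o,x')+d(x',x)$ exactly), and then treats the regular and singular-ray regimes in one stroke via the single observation that $\tilde\rho(G(x))/\tilde\rho(x)\to1$, where $\tilde\rho^2=|z|^2+r^2$ and $G$ is the map already built in the proof of Proposition~\ref{tangentconeisx0}; this sidesteps your separate $H$-region analysis and the cross-region bookkeeping you flag at the end.
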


\begin{proof}
Write $\tilde\rho^2 = |z|^2 + r^2$. Assume for now that $o \in X_1$ is
the origin, and let $x \in X_1$, which we will let
$D = \rho(x) \to \infty$. First we note that by concatenating larger
and larger annuli of the form $(2^i, 2^{2i})$,
Proposition~\ref{tangentconeisx0} implies that the function
$d(o, \cdot)$ is equivalent to $\rho$. Since $\rho$ and $\tilde\rho$
are homogeneous of degree $2$, they are equivalent, too.

Let $x' \in X_1$ be on the minimal geodesic connecting $o$ and $x$
such that $\rho(x') = D^{1/2}$. By Proposition~\ref{tangentconeisx0},
for any $\epsilon > 0$ we have for sufficiently large $D$,
\begin{equation}
\label{eq:bigineq}
\begin{aligned}
d(o,x') + d_{X_0}(G(x'), G(x)) - D\epsilon &< d(o,x) = d(o,x') + d(x',x) \\
&< d(o,x') + d_{X_0}(G(x'), G(x)) + D\epsilon,
\end{aligned}
\end{equation}
where $G$ is the $(D\epsilon)$-Gromov-Hausdorff approximation given in
Proposition~\ref{tangentconeisx0}. Recall that away from the singular
ray, $G$ is given by the nearest point projection with respect to the
cone metric $\ddbar \rho^2$ on $\mathbf{C}^n$, and near the singular
ray we have $|z| \sim \rho$ and $G$ is given by the projection onto
the singular ray. It follows that $\rho(Gx) \sim \rho(x) = D$, and so
$\tilde\rho(x) \sim D$. As $D \to \infty$, the distance of $x$ and
$G(x)$ with respect to the scaled down cone metric
$D^{-2}\ddbar \rho^2$ converges to $0$. It follows that
\begin{equation}
\label{eq:rhoprime}
\frac{\tilde\rho(G(x))}{\tilde\rho(x)} \to 1
\end{equation}
as $D \to \infty$.

Dividing the inequality \eqref{eq:bigineq} by $\tilde\rho(x)$, we
estimate the terms as follows:
\begin{align*}
\frac{d(o,x')}{\tilde\rho(x)} \sim \frac{D^{1/2}}{D} = D^{-1/2}, \\
\frac{d_{X_0}(G(x'),G(x))}{\tilde\rho(x)} \to 1,
\end{align*}
as $D \to \infty$. Here the second estimate follows from the cosine
law of the cone metric on $X_0$ and \eqref{eq:rhoprime}. Letting
$D \to \infty$ we get the desired result. For arbitrary fixed point
$o \in X_1$ the same result follows by an application of the triangle
inequality.
\end{proof}

Finally, we recall the technical heart of \cite{Sz19}, the
invertibility of the Laplacian in weighted spaces:

\begin{prop}
Suppose that we choose $\tau \in (4-2n,0)$ (recall that $n$ is the
complex dimension of $X_1$) and $\delta$ avoids a discrete set of
indicial roots. For sufficiently large $A>0$ the Laplacian
\[
\Delta: C^{2,\alpha}_{\delta,\tau}(\rho^{-1}[A,\infty),\omega) \to
C^{0,\alpha}_{\delta-2,\tau-2}(\rho^{-1}[A,\infty),\omega)
\]
is surjective with inverse bounded independently of $A$.
\end{prop}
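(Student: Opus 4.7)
The strategy is the standard one for Laplacians on manifolds with maximal volume growth: prove a uniform a priori estimate of the form $\|u\|_{C^{2,\alpha}_{\delta,\tau}} \le C \|\Delta u\|_{C^{0,\alpha}_{\delta-2,\tau-2}}$ on $\rho^{-1}[A,\infty)$ with $C$ independent of $A$, then deduce surjectivity by solving on an exhausting family of bounded domains and passing to a limit.

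First I would reduce to a purely $C^0$-style weighted a priori estimate. The interior Schauder estimates apply on balls of bounded geometry after rescaling by $\rho^{-2}w^{-2}\omega$ (cf. the alternative description of the weighted norms in the excerpt), so once I have the $C^0$-weighted bound for $u$ I automatically recover the full $C^{2,\alpha}_{\delta,\tau}$-bound modulo a small loss that can be absorbed. So the real task is: show there is $C>0$ and $A_0>0$ such that for all $A\ge A_0$ and all $u$ supported in $\{\rho\ge A\}$ with $\Delta u=f$,
\[
\sup_{\rho\ge A}\rho^{-\delta}w^{-\tau}|u| \;\le\; C\,\|f\|_{C^{0,\alpha}_{\delta-2,\tau-2}}.
\]

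I would prove this by a blow-up/contradiction argument. Suppose it fails; take a sequence $u_i$ with $\|u_i\|_{C^{2,\alpha}_{\delta,\tau}}=1$ and $\|\Delta u_i\|_{C^{0,\alpha}_{\delta-2,\tau-2}}\to 0$, and pick $p_i$ at which the weighted sup-norm is essentially attained. There are three cases according to which of the three regimes in the definition of $w$ the point $p_i$ sits in. Using Propositions \ref{awaysingular} and \ref{nearsingular}, I rescale $\omega$ by $\rho(p_i)^{-2}w(p_i)^{-2}$ and pull back via $G$ or $H$; the rescaled metrics converge smoothly on compact sets to the appropriate model, namely (a) the cone $X_0=\mathbf{C}\times V_0$ in the ``away from singular rays'' regime, (b) the product $\mathbf{C}\times V_1$ (or a rescaling thereof) in the ``near singular rays'' regime, and (c) an intermediate scale-invariant model coming from the gluing annulus, which interpolates between (a) and (b). After rescaling $u_i$ by the natural factor $\rho(p_i)^{-\delta}w(p_i)^{-\tau}$ the functions are uniformly bounded in $C^{2,\alpha}_{\rm loc}$ and their Laplacians tend to $0$, so by Arzel\`a-Ascoli one extracts a nontrivial harmonic limit $u_\infty$ on the corresponding model that still satisfies weighted growth bounds with the same exponents $(\delta,\tau)$.

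The heart of the matter is ruling out such $u_\infty$. On the cone $X_0$ one separates variables and uses Mellin theory on the link: the hypothesis that $\delta$ avoids the discrete set of indicial roots forces $u_\infty\equiv 0$, and the condition $\tau\in(4-2n,0)$ ensures that functions with the allowed growth along the singular ray (which behaves like $\mathbf{C}\times\mathbf{C}^{n-1}/\Gamma$ transversally) are excluded. On the $\mathbf{C}\times V_1$ model one uses in addition that $(V_1,\ddb\phi)$ is asymptotically conical with cone $V_0$, so the same Fourier/indicial-root decomposition on $V_0$ applied to the $V_1$-factor rules out the limit. The intermediate gluing model is handled by the same indicial analysis together with matching conditions in the two regimes; this is the step I expect to be the main obstacle, because the interpolating weight $w$ behaves like $R/\rho$ on a scale on which the geometry is not quite a cone, and one has to verify that the exponents still line up after the rescaling. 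This should reproduce the analysis carried out in \cite{Sz19}.

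With the uniform a priori estimate in hand, surjectivity is standard. For $f\in C^{0,\alpha}_{\delta-2,\tau-2}$ I would solve the Dirichlet problem $\Delta u_R=f$ on the bounded annular region $\{A<\rho<R\}$ with zero boundary values, which is possible by standard elliptic theory. The uniform estimate applied to a cut-off near $\rho=R$ (using that $\tau<0$ and $\delta$ is below the next indicial root so that boundary errors decay) gives $\|u_R\|_{C^{2,\alpha}_{\delta,\tau}}\le C\|f\|_{C^{0,\alpha}_{\delta-2,\tau-2}}$ uniformly in $R$. Sending $R\to\infty$ and extracting a locally convergent subsequence produces $u\in C^{2,\alpha}_{\delta,\tau}(\rho^{-1}[A,\infty),\omega)$ with $\Delta u=f$ and the same norm bound, so that $\Delta$ is surjective with an inverse bounded uniformly in $A$.
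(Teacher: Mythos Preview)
Your plan is sound but follows a genuinely different route from the paper's. The paper (following \cite{Sz19}) sketches a \emph{parametrix} construction: one covers $\{\rho>A\}$ by the region $\mathcal{U}$ of Proposition~\ref{awaysingular} together with regions $\mathcal{V}$ of the type in Proposition~\ref{nearsingular}; on each piece the model Laplacian (of $X_0$, respectively of $\mathbf{C}\times V_1$, in the corresponding model weighted space) is already known to be invertible, and one patches the local inverses with cutoffs to obtain an approximate right inverse $P_0$ with $\|\Delta P_0-\mathrm{Id}\|$ small for $A$ large, then corrects by a Neumann series. Your proposal instead establishes a uniform weighted a~priori estimate by blow-up/contradiction and deduces surjectivity by exhaustion. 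Both strategies rest on the same analytic input---the mapping properties of the model Laplacians on $X_0$ and on $\mathbf{C}\times V_1$ in the relevant weights, which is precisely where the hypotheses $\tau\in(4-2n,0)$ and ``$\delta$ avoids indicial roots'' enter. The parametrix route is more constructive and delivers the right inverse directly, with the uniformity in $A$ coming for free from the Neumann series; your route trades that for softer Liouville-type statements on the models, at the cost of a more careful treatment of the inner boundary $\rho=A$ in the exhaustion step. One small correction: there is no genuinely separate ``third model'' in the gluing annulus. When the contradiction point $p_i$ lies in the region $R\sim\rho^\alpha$ one still has $R(p_i)\to\infty$, and after rescaling by $(\rho w)(p_i)=R(p_i)/\kappa$ the geometry converges to (an open subset of) the cone $X_0$ away from its singular ray; so your case~(c) folds into case~(a), and what you flag as the main obstacle is not an independent difficulty.
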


The idea of the proof is to cover $X_1$ (outside a big compact set) by
the open subset $\mathcal{U}$ and open subsets of types $\mathcal{V}$
near the singular rays. On each such open set, the model Laplacian is
invertible with respect to the corresponding model weighted
space. Then one construct a parametrix by patching local inverses
together using cutoff functions.

\section{Construction of new Calabi-Yau metrics}\label{sec:x1b}

We now turn to constructing a new family of Calabi-Yau metrics on
$\mathbf{C}^3$, building on the results in the previous section. Similar
to the construction of Calabi-Yau metrics on $X_1$ as in \cite{Sz19},
we will consider the following family of hypersurfaces
\[
X_{1,b} = \{ z + by + x_1^2 + x_2^2 + y^3 = 0 \} \subset \mathbf{C}^4,
\]
where $b \in \mathbf{C}$. More generally, we could consider
\[
X_{a,b} = \{ az + by + x_1^2 + x_2^2 + y^3 = 0 \} \subset \mathbf{C}^4,
\]
where $a \ne 0 \in \mathbf{C}$ and $b \in \mathbf{C}$. The effect of
$a$ can be taken care of by rescaling the metric. So we will assume
$a=1$. Later in Section~\ref{sec:ds} we will give a detailed
explanation why the following construction of Calabi-Yau metrics on
$X_{a,b}$ would possibly give all the Calabi-Yau metrics on
$\mathbf{C}^3$ with tangent cone $\mathbf{C} \times A_2$ at infinity.

Let
\[
\Omega = dx_1 \wedge dx_2 \wedge dy
\]
be the holomorphic volume form on $X_{1,b}$. The rest of the section
is dedicated to proving the following:

\begin{thm}\label{thm:construction}
  There exists a complete K\"ahler metric $\omega_{1,b}$ on $X_{1,b}$
  such that
  \[
    \omega_{1,b}^3 = \sqrt{-1} \Omega\wedge\overline\Omega,
  \]
  and that the tangent cone at the infinity given by
  $\mathbf{C} \times A_2$.
\end{thm}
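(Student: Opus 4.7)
The plan is to follow the approximate-solution-plus-perturbation scheme of Sz\'ekelyhidi~\cite{Sz19} recalled in Section~\ref{sec:x1}. I will first produce an approximate K\"ahler metric $\omega^{\mathrm{app}}_{1,b}$ on $X_{1,b}$ whose Ricci potential $h_b$ decays in the weighted H\"older spaces of Section~\ref{sec:x1}, and then solve the complex Monge--Amp\`ere equation $(\omega^{\mathrm{app}}_{1,b} + \ddbar u)^3 = \sqrt{-1}\,\Omega \wedge \overline\Omega$ for a small weighted perturbation $u$ using the invertibility of $\Delta$ in the same spaces.

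For the approximate metric, the key point stressed in the introduction is that the extra term $by$ has weight $2$ under the $\mathbf{C}^*$-action $F_t$, whereas $z$ has weight $1$ and $f=x_1^2+x_2^2+y^3$ has weight $d=6$. Hence outside a large compact set $X_{1,b}$ is a small perturbation of $X_1$ with respect to the ambient cone metric $\ddbar(|z|^2+R^2)$ on $\mathbf{C}^4$. I will define the nearest-point projection
\[
\Pi_b : X_1 \cap \{\rho > P\} \longrightarrow X_{1,b}
\]
using this ambient metric and set $\omega^{\mathrm{app}}_{1,b} := (\Pi_b^{-1})^* \omega$ on $X_{1,b} \cap \{\rho > 2P\}$, where $\omega$ is the approximate metric from Section~\ref{sec:x1}. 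On the compact core of $X_{1,b}$, which contains the two $A_1$ singular fibers of $z: X_{1,b} \to \mathbf{C}$, I will extend $\omega^{\mathrm{app}}_{1,b}$ to a smooth K\"ahler metric via the regularized maximum construction of \cite[p.~2659]{Sz19}. Since $\Pi_b$ is close to the identity at infinity in the appropriate weighted scale, pulling back the decay statement of Proposition~\ref{prop:adecay} via $\Pi_b^{-1}$ yields the same three-region decay for $h_b$, possibly after slightly shrinking $\delta$ while keeping $\delta<2/d$. The analogues of Propositions~\ref{awaysingular}, \ref{nearsingular} and \ref{tangentconeisx0} on $X_{1,b}$ then follow by the same comparison, so $(X_{1,b}, \omega^{\mathrm{app}}_{1,b})$ already has tangent cone $\mathbf{C} \times A_2$ at infinity.

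With the approximate metric and weighted H\"older spaces in place on $X_{1,b}$, the invertibility of $\Delta$ is proved exactly as at the end of Section~\ref{sec:x1}: the model geometry at infinity is still $\mathbf{C}\times V_0$ away from the singular rays and $\mathbf{C}\times V_1$ close to them, so the parametrix obtained by patching local inverses via cutoff functions applies without change. A bounded inverse $\Delta^{-1}: C^{0,\alpha}_{\delta-2,\tau-2} \to C^{2,\alpha}_{\delta,\tau}$ with $\delta<2/d$ and $\tau \in (-2,0)$ then permits a Newton iteration, or equivalently a contraction argument, that produces $u \in C^{2,\alpha}_{\delta,\tau}$ solving the Monge--Amp\`ere equation. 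Since $u \to 0$ at infinity in these weighted norms, the resulting Calabi--Yau metric $\omega_{1,b} = \omega^{\mathrm{app}}_{1,b} + \ddbar u$ retains the tangent cone $\mathbf{C}\times A_2$.

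The main obstacle will be the technical verification that $\Pi_b$ genuinely transfers Sz\'ekelyhidi's estimates through the delicate gluing region $R \sim \rho^{1/d}$, where the two models $\mathbf{C}\times V_0$ and $\mathbf{C}\times V_1$ are interpolated and where the $by$-correction sits on an intermediate scale relative to the other terms in the defining equation. One must check that $\Pi_b$ is a diffeomorphism with uniformly bounded derivatives in each of the weighted scales, and that the leading-order cancellations responsible for the sharp weight $\delta<2/d$ in Proposition~\ref{prop:adecay} survive the pullback. The extra combinatorial complication caused by the two distinct singular fibers of $z$ is, fortunately, confined to a compact region and is absorbed into the regularized-maximum extension, so it does not affect the analysis at infinity.
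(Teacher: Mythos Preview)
Your outline matches the paper's strategy closely: use the nearest point projection between $X_1$ and $X_{1,b}$ with respect to the ambient cone metric, transfer Sz\'ekelyhidi's approximate solution and weighted estimates, and run a contraction mapping. The paper frames this slightly differently---it pulls back the complex structure and volume form from $X_{1,b}$ to $X_1$ and solves $(\omega_b + \sqrt{-1}\partial_b\bar\partial_b u)^3 = \sqrt{-1}\Omega_b\wedge\overline{\Omega}_b$ on $X_1$ with the deformed operators, rather than pushing $\omega$ forward to $X_{1,b}$---but the two viewpoints are intertwined by $\Pi_b$ and lead to the same estimates. One small point you gloss over: because the complex structure (equivalently, the approximate metric) is perturbed, the relevant Laplacian is $\Delta_b \ne \Delta$, and the paper spends a lemma (Lemma~\ref{lemma:laplacianinverse}) showing that $\Delta_b$ still has a bounded right inverse on $\rho^{-1}[A,\infty)$ by bounding $\|\Delta_b - \Delta\|$ in the weighted norms via Proposition~\ref{prop:bdecay}. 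Your sentence ``the parametrix\ldots applies without change'' hides this.

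There is, however, a genuine gap in your final step. The right inverse for the Laplacian in Section~\ref{sec:x1} is only established on $\rho^{-1}[A,\infty)$, and the contraction argument needs the Ricci potential to be \emph{small} in the weighted norm, which it will not be on the compact core after your regularized-maximum extension. So the contraction you describe produces a Calabi--Yau metric only on $X_{1,b}\cap\{\rho > A\}$, not globally. The paper handles this by a two-stage process: after the contraction yields a metric that is Ricci-flat outside a compact set, one extends the potential inward to a K\"ahler metric $\tilde\omega_b$ on all of $X_{1,b}$ and then invokes Hein's version of the Tian--Yau existence theorem \cite[Proposition~4.1]{Hein}, which requires checking that $(X_{1,b},\tilde\omega_b)$ is $\mathrm{SOB}(6)$ and admits a $C^{3,\alpha}$ quasi-atlas. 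Note also that one cannot skip the first contraction step and apply Hein directly to the pushed-forward approximate metric: as the paper points out, the error introduced by $\Pi_b$ decays slower than quadratically near the singular rays, so Hein's hypothesis fails there. The contraction on the exterior is precisely what upgrades the Ricci potential to compactly supported, after which Hein's theorem finishes the job. Your proposal is missing this second, global perturbation step.
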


Let
\[
\Phi = |z|^2 + \gamma_1(R\rho^{-\alpha})r^2 +
\gamma_2(R\rho^{-\alpha})|z|^{2/d}\phi(z^{-1/d}\cdot (x,y))
\]
be the K\"ahler potential of the approximate solution on $X_1$
constructed in the previous section. The strategy is to use the
nearest point projection
$G: X_1 \cap \{ \rho > A\} \to X_{1,b} \cap \{ \rho > A \}$ with
respect to the ambient cone metric $\ddbar \rho^2$ for large enough
$A>0$, to pull back the volume form
$\sqrt{-1}\Omega\wedge\overline\Omega$ as well as the complex
structure $J$ on $X_{1,b}$, and solve
\[\label{eq:21}
  (\bddbar (\Phi + u))^3 = \sqrt{-1}\Omega_b \wedge \overline\Omega_b.
\]
Here $\partial_b$ and $\bar\partial_b$ are the partial differentials
with respect to the complex structure $J_b = G_*J (G^{-1})_*$, and
$\Omega_b = G^*\Omega$ is the pullback of the holomorphic volume
form. Once this is done, we push forward this metric using $G$ to
$X_{1,b}$ and obtain a Calabi-Yau metric on $X_{1,b}$ outside a large
compact subset. Then we extend it to a K\"ahler metric on $X_{1,b}$
which is Ricci-flat outside a large compact subset. We can then apply
Hein's version of the Tian-Yau perturbation theorem \cite{Hein} to
perturb it again to a genuine Calabi-Yau metric on $X_{1,b}$.

\begin{remark}
  One could try to write down an explicit approximate solution on
  $X_{1,b}$ without relying on the nearest point projection, and apply
  the techniques in the previous section directly on $X_{1,b}$, but
  then an issue is that the fibration is non-trivial away from the
  singular fibers. This potentially would make the analysis harder. We
  use the nearest point projection because near the singular ray and
  far from the singular fibers, we are still comparing the geometry of
  $X_{1,b}$ to the geometry of $\mathbf{C} \times V_1$. See the proof
  of Proposition~\ref{prop:bdecay} below.
\end{remark}

The nearest point projection $G: X_1 \to X_{1,b}$ is only defined
outside compact subsets containing the origin $0 \in \mathbf{C}^4$, as
the cone metric $\ddbar \rho^2$ is singular at $0$ (and also singular
along the singular rays $\mathbf{C} \subset \mathbf{C}^4$). Recall that
scaling down the metric amounts to making the coordinate change
$z \to D^{-1}z$, $x \to D^{-1} \cdot x$. One might be tempted to
conclude that the error going from $X_1$ to $X_{1,b}$ is
$O(b\rho^{-4})$ by comparing the defining equations. If this were
true, then we may apply Hein's perturbation theorem directly to
perturb the Calabi-Yau metric $\omega$ on $X_1$ to a (pullback of)
Calabi-Yau metric on $X_{1,b}$. Unfortunately this is not the case, as
both $X_1$ and $X_{1,b}$ converges to $X_0$, whose singular set is
complex one-dimensional. To get meaningful $C^{k,\alpha}$ bounds of
the errors introduced by the nearest point projection, we need to
apply the region analysis in Proposition~\ref{prop:adecay} in the
previous section, comparing the geometry in each region to those of
different model spaces.

\subsection{Decay of the Ricci potential}

Let us write $\omega_b = \bddbar \Phi$ as the approximate solution. As
mentioned above, we want to solve \eqref{eq:21} on
$X_1 \cap \{ \rho > A\}$ for large enough $A$. To solve for $u$, we
want to ensure that the Ricci potential
\[
h = \log \frac{\omega_b^3}{\sqrt{-1}\Omega_b \wedge \overline\Omega_b}
\]
has fast enough decay in order to apply the technical results
discussed in the previous section. We have the following
generalization of Proposition~\ref{prop:adecay} in our
$\mathbf{C} \times A_2$ case.

\begin{prop}
\label{prop:bdecay}
Fix $\alpha \in (1/d,1)$. The form $\omega_b$ defines a K\"ahler
metric with respect to the deformed complex structure $J_b$ on the
$X_1 \cap \{\rho > P\}$, for sufficiently large $P$ (depending on
$b$). For suitable constants $\kappa, C_i >0$ and weight
$\delta < 2/d$, the Ricci potential $h$ of $\omega_b$ with respect to
$G^*(\sqrt{-1}\Omega_b\wedge\overline\Omega_b)$ and the error in the
complex structure satisfy, for large $\rho$,
\begin{align*}
|\nabla^i h|_\omega,
|\nabla^i(\omega_b-\omega)|,|\nabla^i(J_b-J)|_\omega < \max\{1,b\}
\begin{cases}
C_i \rho^{\delta-2-i} & \text{if } R > \kappa\rho \\
C_i \rho^{\delta}R^{-2-i} & \text{if } R \in (\kappa^{-1}\rho^{1/d}, \kappa \rho) \\
C_i \rho^{\delta-2/d-i/d} & \text{if } R < \kappa^{-1}\rho^{1/d}.
\end{cases}
\end{align*}
In fact, since $d=6$, we can choose $\delta \in [-1/3,1/3)$. In terms
of the weighted spaces defined in the previous section, we have that
\[
\|h\|_{C^{k,\alpha}_{\delta-2,-2}},
\|\omega_b-\omega\|_{C^{k,\alpha}_{\delta-2,-2}},
\|J_b-J\|_{C^{k,\alpha}_{\delta-2,-2}} \le C_k\max\{1,b\}
\]
for a uniform constant $C_k>0$.
\end{prop}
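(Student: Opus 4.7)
The plan is to control the nearest point projection $G\colon X_1 \cap \{\rho > P\} \to X_{1,b}$ and transfer the decay from Proposition~\ref{prop:adecay}. Since the defining functions of $X_1$ and $X_{1,b}$ differ only by the term $by$, an implicit function theorem applied at each scale gives, to leading order,
\[
G(p) - p \approx -\frac{b\, y(p)}{|\nabla F_{1,b}|^{2}_{\ddb\rho^{2}}}\, \overline{\nabla^{1,0} F_{1,b}(p)},
\]
where $F_{1,b} = z + by + f(\mathbf{x})$ and the gradient is taken in the ambient cone metric. Once the shift $G - \mathrm{id}$ and its higher derivatives are estimated in each of the three regions from Proposition~\ref{prop:adecay}, the error $J_b - J$ follows by differentiating $G$, the difference $\omega_b - \omega = (\bddbar - \ddb)\Phi$ by an additional derivative, and the Ricci potential by comparing $G^*(\sqrt{-1}\Omega_b \wedge \overline\Omega_b)$ with the volume form $\sqrt{-1}\Omega_1 \wedge \overline\Omega_1$ on $X_1$.

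In the region $R > \kappa\rho$ the ambient metric is essentially the cone metric $\ddb\rho^{2}$, and the weights $w(y) = 2$, $w(f) = d = 6$ make $|\nabla F_{1,b}|_{\ddb\rho^2}$ scale like $\rho^{d-1}$, dominating the contribution of $by \sim |b|\rho^2$. Scaling under the $\mathbf{C}^*$-action then yields pointwise decay of $G - \mathrm{id}$ and its $\omega$-derivatives at a rate that beats the weight required by the proposition, with a leading factor of $\max\{1,|b|\}$. In the region $R < \kappa^{-1}\rho^{1/d}$ close to the singular ray, I would work instead in the rescaled coordinates of Proposition~\ref{nearsingular}, where the local model is $\mathbf{C}\times V_1$ at scale $|z_0|^{1/d}\sim\rho^{1/d}$. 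The substitution $\hat{\mathbf{x}} = z_0^{-1/d}\cdot\mathbf{x}$, $\hat z = z_0^{-1/d}(z - z_0)$ turns the perturbation $by$ into $b z_0^{2/d-1}\hat y$ at unit scale, and a parallel scaling argument in the $\mathbf{C}\times V_1$ model produces the decay of $G - \mathrm{id}$ and its derivatives with the weights of the third region. The intermediate gluing region is handled by interpolation with a cutoff, since both local models are uniformly comparable to $\ddb\rho^2$ there.

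Given those bounds, I would decompose the Ricci potential as
\[
h = \log\frac{\omega_b^3}{\omega^3} + h_0 + \log\frac{\sqrt{-1}\Omega_1\wedge\overline\Omega_1}{G^*(\sqrt{-1}\Omega_b \wedge \overline\Omega_b)},
\]
where $h_0$ is the Ricci potential of $\omega$ on $X_1$, already controlled by Proposition~\ref{prop:adecay}. The first term is estimated in terms of $\|\omega_b - \omega\|$, which is controlled by $\nabla G$ times $|\ddb \Phi|$; the last by the Jacobian of $G$. Both were bounded in the previous paragraph, so each piece decays at the advertised rate, with the $\max\{1,|b|\}$ factor absorbing the linear dependence on $b$. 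Positivity of $\omega_b$ on $\{\rho > P\}$ then follows because $\omega_b - \omega$ is arbitrarily small pointwise once $P$ is taken large enough (depending on $b$).

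The main subtlety is that under the weighted $\mathbf{C}^*$-scaling $F_t$, the perturbation $by$ at unit scale is of size $|b|t^{-2}$, whereas the distance from $X_1$ to the cone $X_0$ is of size $t^{1-d} = t^{-5}$; in particular $G$ is a \emph{larger} deformation than the one taking $X_1$ to $X_0$. This rules out a naive Tian--Yau-style comparison of $X_1$ with $X_{1,b}$ through the cone at infinity, and is the reason we work directly on $X_1$ using the weighted norms and the $\mathbf{C}\times V_1$ local model near the singular rays, rather than trying to approximate $X_{1,b}$ from $X_0$.
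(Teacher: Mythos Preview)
Your proposal is correct and follows essentially the same approach as the paper: the same decomposition of $h$ into the old Ricci potential plus the metric and volume-form errors, the same observation that $\omega_b - \omega = d(J_b - J)d\Phi$ is controlled by the complex-structure error, and the same scaling analysis that identifies the extra term $by$ as contributing $bD^{2-d}$ in the outer region and $bz_0^{2/d-1}$ near the singular ray. The only difference is that the paper handles the intermediate region $R \in (\kappa^{-1}\rho^{1/d}, \kappa\rho)$ not by interpolation with a cutoff but by a direct rescaling by $K \sim R$ (splitting further into sub-regions II--IV according to whether $K$ lies above, at, or below $\rho^\alpha$), which is what actually produces the stated rate $\rho^{\delta}R^{-2-i}$; your cutoff remark is vaguer than what is needed there, but the scaling argument you set up in the other two regions transfers verbatim.
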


\begin{proof}
The proof is very similar to Proposition~\ref{prop:adecay} before. The
main difference is that in this case the complex structure as well as
the holomorphic volume form are deformed. As a result the Ricci
potential is given by
\begin{align*}
h &= \log \frac{\omega_b^3}{\sqrt{-1}\Omega_b \wedge \overline\Omega_b} \\
&=
\log \frac{\omega^3}{\sqrt{-1}\Omega \wedge \overline\Omega} +
\log \frac{\omega_b^3}{\omega^3} + 
\log \frac{\sqrt{-1}\Omega\wedge\overline\Omega}{\sqrt{-1}\Omega_b \wedge \overline\Omega_b}.
\end{align*}
Here we recall that $\Omega$ is the holomorphic volume form on
$X_1$. Thus we will have additional errors introduced by the change in
the complex structure as well as the change in the volume form. For
the metric, we can estimate the error by
\[
\omega_b - \omega = d (J_b-J) d\Phi.
\]
Since $\Phi$ has growth rate $2$, it follows that the error in the
metric is dominated by the error in the change of the complex
structure. We perform the region analysis as in the proof of
Proposition~\ref{prop:adecay}.

\textbf{Region I}: Suppose $R > \kappa\rho$ and $\rho \in (D/2, 2D)$
for some large $D$. Since $R > (\kappa/2)D$, we are uniformly away
from the singular rays. We study the scaled metric $D^{-2}\omega$ in
terms of the rescaled coordinates $\tilde{z} = D^{-1}z$,
$\tilde{x} = D^{-1} \cdot x$. The equation of $X_1$ becomes
\[
D^{1-d}\tilde{z} + f(\tilde{x}) = 0,
\]
and the equation of $X_{1,b}$ becomes
\[
D^{1-d}\tilde{z} + bD^{2-d}\tilde{y} + f(\tilde{x}) = 0.
\]
Thus the extra error is of order $bD^{2-d}$. We can choose any
$\delta$ such that $\delta-2 > 2-d$. Since $d = 6$, we can
make $\delta < 0$.

\textbf{Region II}: Suppose now that $R \in (K/2, 2K)$ for some
$K < \kappa \rho$, $K/2 > 2\rho^\alpha$ and $\rho \in (D/2, 2D)$. In
this case $\rho$ is comparable to $|z|$. We assume that for some fixed
$z_0$ we have $|z-z_0| < K$. We now scale the metric by $K$, and define
\[
\tilde{z} = K^{-1}(z-z_0),\:\:\: \tilde{x} = K^{-1} \cdot x, \:\:\: \tilde{r} =
K^{-1}r.
\]
The equation of $X_1$ is
\[
K^{-d}(K\tilde{z} + z_0) + f(\tilde{x}) = 0,
\]
while the equation of $X_{1,b}$ is
\[
K^{-d}(K\tilde{z} + z_0) + bK^{2-d}\tilde{y} + f(\tilde{x}) = 0.
\]
Since $|\tilde{y}| \sim 1$, thus the extra error in the Ricci potential
is of order $bK^{2-d}$. Since $d=6$ and $K > 4\rho^\alpha$, we have
\[
bK^{4-d}K^{-2} < bCD^{(4-d)\alpha}K^{-2}
\]
for a constant $C$. We can choose $\delta < 0$ such that
$(4-d)\alpha < \delta$. If $\alpha$ is close to $1$ then we can choose
$\delta = -1$.

\textbf{Region III}: Suppose $R \in (K/2, 2K)$,
$K \in (\rho^\alpha, 2\rho^\alpha)$ and $\rho \in (D/2, 2D)$. Thus
$|z|$ is comparable to $D$. We are in the gluing region. We scale as
in Region II. The equation of $X_1$ becomes
\[
K^{-d}(K\tilde{z} + z_0) + f(\tilde{x}) = 0,
\]
and the equation of $X_{1,b}$ becomes
\[
K^{-d}(K\tilde{z} + z_0) + bK^{2-d}\tilde{y} + f(\tilde{x}) = 0.
\]
The extra error in the Ricci potential is then again of order
$bK^{2-d}$. Since $K \sim D^\alpha$, we can estimate it as follows:
\[
bK^{4-d}K^{-2} < bCD^{(4-d)\alpha}K^{-2}.
\]
So here we can choose $0 > \delta > (4-d)\alpha$.

\textbf{Region IV}: Suppose now that $R \in (K/2, 2K)$,
$K \in (\kappa\rho^{1/d}, \rho^\alpha/2)$, and $\rho \in (D/2,
2D)$. Then we have $|z| \sim D$. We scale in the same way as in
Regions II, III. The equation of $X_1$ is
\[
K^{-d}(K\tilde{z} + z_0) + f(\tilde{x}) = 0,
\]
and we are comparing $X_1$ to $\mathbf{C} \times V_{K^{-d}z_0}$, given by the
equation
\[
K^{-d}z_0 + f(\tilde{x}) = 0.
\]
On the other hand the error going from $X_{1,b}$ to $X_1$ is still of
order $bK^{2-d}$. Since $K > \kappa \rho^{1/d}$, we get
\[
K^{4-d}K^{-2} \le bCD^{4/d-1} K^{-2}.
\]
Since $4/d-1 < 0$, we can choose $\delta < 0$.

\textbf{Region V}: Suppose that $R < 2\kappa^{-1}\rho^{1/d}$,
$\rho \in (D/2, 2D)$. Then $|z|$ is comparable to $D$. Fix $z_0$ and
let $z$ be very close $z_0$. We scale by $|z_0|^{1/d}$:
\[
\tilde{z} = z_0^{-1/d}(z-z_0), \:\:\: \tilde{x} = z_0^{-1/d}\cdot x,
\:\:\: \tilde{r} = |z|^{-1/d}r.
\]
So we have $|\tilde{z}|, |\tilde{r}| < C$. We are near the singular
rays. So we compare $X_1$:
\[
z_0^{1/d-1}\tilde{z} + 1 + f(\tilde{x}) = 0
\]
with $\mathbf{C} \times V_1$:
\[
1 + f(\tilde{x}) = 0.
\]
On the other hand, the equation of $X_{1,b}$ becomes
\[
z_0^{1/d-1}\tilde{z} + 1 + b z_0^{2/d-1}\tilde{y} + f(\tilde{x}) = 0.
\]
So the extra error in this case is
$b|z_0|^{(2-d)(1/d)} \le bCD^{2/d-1} \le bCD^{\delta - 2/d}$, where we
choose $0 > \delta \ge 4/d -1$.
\end{proof}

As indicated in the proof, the decay rate of the error introduced by
the nearest point projection is slower than quadratic in the region
close to the singular rays, so we cannot apply Hein's perturbation
theorem directly. But as the proposition concludes, we still have good
decay rates that allow us to improve upon using the contraction
mapping principle as in \cite{Sz19}. We first need to take care of the
fact that in our case, the Laplacian is also perturbed:

\begin{lemma}
\label{lemma:laplacianinverse}
Let $\tau \in (-2,0)$, and let $\delta$ avoids a discrete set of
indicial roots. The Laplacian $\Delta_b$ with respect to the metric
defined by $\omega_b$ is a map from
$C^{2,\alpha}_{\delta,\tau}(\rho^{-1}[A,\infty))$ to
$C^{0,\alpha}_{\delta-2,\tau-2}(\rho^{-1}[A,\infty))$ with bounded
right inverse when $A$ is sufficiently large.
\end{lemma}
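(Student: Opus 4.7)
The plan is to treat $\Delta_b$ as a small perturbation of the Laplacian $\Delta$ of $\omega$, for which the preceding proposition already supplies a right inverse $P$ with operator norm bounded independently of $A$. Writing $E=\Delta_b-\Delta$, one has $\Delta_b P=I+EP$, so it suffices to prove $\|EP\|_{\mathrm{op}}<1$ on $C^{0,\alpha}_{\delta-2,\tau-2}(\rho^{-1}[A,\infty))$ for $A$ sufficiently large; a Neumann series then shows that $I+EP$ is invertible and $P(I+EP)^{-1}$ is the desired bounded right inverse of $\Delta_b$.

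Since $E$ is a second-order operator on functions, its action on $f$ is linear in the coefficient $g_b-g$ paired with $\nabla^2 f$ and in $\nabla(g_b-g)$ paired with $\nabla f$, with smooth bounded $g$-tensors as coefficients. Because $g_b$ depends smoothly on $\omega_b$ and $J_b$, Proposition~\ref{prop:bdecay} yields $\|g_b-g\|_{C^{k,\alpha}_{\delta-2,-2}}\le C\max\{1,|b|\}$ for any admissible $\delta<2/d$ (with $d=6$ here). Combining this with the product rule for weighted spaces (Proposition~\ref{prop:weightedprop}) and the standard pointwise bounds $|\nabla^j f|_g\le\|f\|_{C^{2,\alpha}_{\delta,\tau}}\rho^{\delta-j}w^{\tau-j}$ for $j\le 2$, one reduces the operator norm of $E\colon C^{2,\alpha}_{\delta,\tau}\to C^{0,\alpha}_{\delta-2,\tau-2}$ to a constant multiple of the pointwise envelope $\sup_{\rho>A}\rho^{\delta-2}w^{-2}$.

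This envelope is where the main obstacle sits, and it is what forces the precise decay rate in Proposition~\ref{prop:bdecay}: away from the singular ray $w\sim 1$, so the envelope behaves like $\rho^{\delta-2}\to 0$, whereas near the singular ray $w\sim\rho^{1/d-1}$, so $w^{-2}$ grows like $\rho^{2-2/d}$ and the envelope behaves like $\rho^{\delta-2/d}$. Both tendencies require exactly $\delta<2/d$, which is the sharp range provided by Proposition~\ref{prop:bdecay}. Hence $\|E\|_{\mathrm{op}}\to 0$ as $A\to\infty$, so that $\|EP\|_{\mathrm{op}}<1/2$ for $A$ large and the Neumann-series argument closes. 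The weighted Hölder-seminorm part of the estimate follows by one further application of the product rule, using that $\nabla(g_b-g)$ lies in $C^{k-1,\alpha}_{\delta-3,-3}$ by Proposition~\ref{prop:bdecay}, and the fact that $\tau\in(-2,0)$ and $\delta$ avoiding indicial roots is inherited verbatim from the unperturbed proposition.
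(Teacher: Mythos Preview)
Your argument is correct and follows essentially the same perturbation strategy as the paper: both show that $E=\Delta_b-\Delta$ has small operator norm on $\rho^{-1}[A,\infty)$ by pairing the decay of $g_b-g$ from Proposition~\ref{prop:bdecay} with the product rule for weighted norms, and then conclude via a Neumann series. Your explicit pointwise-envelope computation $\sup_{\rho>A}\rho^{\delta-2}w^{-2}\to 0$ (which is where the threshold $\delta<2/d$ enters) is a slightly cleaner packaging of the same estimate the paper obtains through weighted-norm index bookkeeping.
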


\begin{proof}
Let
$P: C^{0,\alpha}_{\delta-2,\tau-2}(\rho^{-1}[A,\infty)) \to
C^{2,\alpha}_{\delta,\tau}(\rho^{-1}[A,\infty))$ be the right inverse
for $\Delta$. For $u \in C^{k,\alpha}_{\delta,\tau}$, by direct
computation we have
\begin{align*}
\|\Delta_bu-\Delta u\|_{C^{0,\alpha}_{\delta-2,\tau-2}}
\le \|\nabla(g_b-g) \ast \nabla u\|_{C^{0,\alpha}_{\delta-2,\tau-2}}
+ \|(g_b-g)\ast \nabla^2 u\|_{C^{0,\alpha}_{\delta-2,\tau-2}}.
\end{align*}
Using the properties of the weighted norms in
Propsition~\ref{prop:weightedprop}, we have
\begin{align*}
\|\nabla(g_b-g)\ast \nabla u\|_{C^{0,\alpha}_{\delta-2,\tau-2}}
&\le C\|\nabla(g_b-g)\|_{C^{0,\alpha}_{-1,-3}}\|\nabla u\|_{C^{0,\alpha}_{\delta-1,\tau+1}} \\
&\le C\|g_b-g\|_{C^{1,\alpha}_{0,-2}}\|u\|_{C^{1,\alpha}_{\delta,\tau+2}} \\
&\le C\|g_b-g\|_{C^{2,\alpha}_{0,-2}}\|u\|_{C^{2,\alpha}_{\delta,\tau+2}} \\
&\le C\|g_b-g\|_{C^{2,\alpha}_{0,-2}}\|u\|_{C^{2,\alpha}_{\delta,\tau}}.
\end{align*}
Similarly,
\begin{align*}
\|(g_b-g)\ast \nabla^2 u\|_{C^{0,\alpha}_{\delta-2,\tau-2}}
&\le C\|g_b-g\|_{C^{0,\alpha}_{0,-2}}\|\nabla^2 u\|_{C^{0,\alpha}_{\delta-2,\tau}} \\
&\le C\|g_b-g\|_{C^{1,\alpha}_{2,-2}}\|u\|_{C^{2,\alpha}_{\delta,\tau+2}} \\
&\le C\|g_b-g\|_{C^{2,\alpha}_{0,-2}}\|u\|_{C^{2,\alpha}_{\delta,\tau}}.
\end{align*}
It follows that
\begin{align*}
\|\Delta_bu-\Delta u\|_{C^{0,\alpha}_{\delta-2,\tau-2}} \le
C\|g_b-g\|_{C^{2,\alpha}_{0,-2}}\|u\|_{C^{2,\alpha}_{\delta,\tau}}
\end{align*}
for a uniform constant $C > 0$. By Proposition~\ref{prop:bdecay},
$\|g_b-g\|_{C^{2,\alpha}_{0,-2}(\rho^{-1}[A,\infty)))}$ can be made
arbitrarily small once $A \gg 1$. It follows that
\begin{align*}
\|u - \Delta_b P u\|_{C^{0,\alpha}_{\delta-2, \tau-2}}
&\le \|\Delta_bP u-\Delta P u\|_{C^{0,\alpha}_{\delta-2,\tau-2}} \\
&\le
C\|g_b-g\|_{C^{2,\alpha}_{0,-2}}\|u\|_{C^{2,\alpha}_{\delta,\tau}}
\ll \|u\|_{C^{2,\alpha}_{\delta,\tau}}.
\end{align*}
It follows that $\Delta_b$ admits a bounded right inverse.
\end{proof}

\subsection{Perturbing to genuine solution}

We use the approximate solution $\omega$ on $X_1$ and the weighted
spaces defined in the previous section. Recall that our goal is to
first solve \eqref{eq:21} on $X_1\cap \{\rho > A\}$ for large
$A$. Define
\[
\mathcal{B} = \{ u \in C^{2,\alpha}_{\delta, \tau}\mid
\|u\|_{C^{2,\alpha}_{\delta, \tau}} < \epsilon_0\},
\]
where $\tau$ is now chosen to be close to $0$ and $\epsilon_0$ is
sufficiently small such that $\omega + \ddbar u$ has the same tangent
cone at infinity as $\omega$. Consider the following operator
\begin{align*}
F: \mathcal{B} &\to C^{0,\alpha}_{\delta-2,\tau-2}(\rho^{-1}[A,\infty)) \\
u &\mapsto \log \left.\frac{(\tilde\omega+ \bddbar
  u)^3}{{\sqrt{-1}\Omega_b\wedge\overline\Omega_b}}\right|_{\rho^{-1}[A,\infty)},
\end{align*}
and write
\begin{equation}
\label{eq:19}
F(u) = F(0) + \Delta_b u + Q(u),
\end{equation}
where $Q$ is the nonlinear part of $F$. Here $F(0)=h$ is given by the
Ricci potential defined above. The goal is to find $u \in \mathcal{B}$ such
that $F(u) = 0$, or equivalently
\begin{equation}
\label{eq:24}
\Delta_b u = -F(0) -Q(u).
\end{equation}
Let $P$ be the right inverse for $\Delta_b$ in
Lemma~\ref{lemma:laplacianinverse}. Define the map
\[
N(u) = P(-F(u)-Q(u)).
\]
Then finding a solution to \eqref{eq:24} is the same as finding a
fixed point of $N$. Note that we have a uniform bound for $P$
independent of sufficiently large $A$. Thus we can enlarge $A$ when
needed. From an explicit formula for $Q$ (e.g. expand
$\log \det (I+A)-\mathrm{tr} A$ using eigenvalues for $A$), we see
that if
\[
\|\bddb u\|_{C^{0,\alpha}_{0,0}},\|\bddb v\|_{C^{0,\alpha}_{0,0}} \ll 1,
\]
then we have the estimate
\[
\|Q(u)-Q(v)\|_{C^{0,\alpha}_{\delta-2,\tau-2}} \le
C(\|\bddb u\|_{C^{0,\alpha}_{0,0}}+\|\bddb v\|_{C^{0,\alpha}_{0,0}})
\|\bddb(u-v)\|_{C^{0,\alpha}_{\delta-2,\tau-2}}.
\]
To estimate $\|\bddb u\|_{C^{0,\alpha}_{0,0}}$ in terms of the norm of
$u$, we have
\begin{equation}
\begin{aligned}
\|\bddb u\|_{C^{0,\alpha}_{0,0}} &\le \|\ddb u\|_{C^{0,\alpha}_{0,0}} +
\|(\bddb-\ddb) u\|_{C^{0,\alpha}_{0,0}} \\
&\le C(1 + \|J_b-J\|_{C^{2,\alpha}_{-2,-2}})\|u\|_{C^{2,\alpha}_{2,2}} \\
&\le C\max\{1,b\}\|u\|_{C^{2,\alpha}_{2,2}}
\end{aligned}
\end{equation}
by Proposition~\ref{prop:bdecay}. Since we have
\[
\rho^{\delta}w^\tau \le C\rho^{\delta-2+(\tau-2)(1/d-1)}\rho^2 w^2,
\]
which implies
\[
\|u\|_{C^{2,\alpha}_{2,2}} \le C\|u\|_{C^{2,\alpha}_{\delta,\tau}},
\]
by choosing $\epsilon_0 < C\max\{1,b\}^{-1}$ for a uniform constant
$C > 0$ we have
\[
\|N(u)-N(v)\|_{C^{2,\alpha}_{\delta,\tau}} < \frac{1}{2}\|u-v\|_{C^{2,\alpha}_{\delta,\tau}}
\]
for $u,v \in \mathcal{B}$; i.e. $N$ is a contraction mapping. It
remains to ensure that $N$ maps $\mathcal{B}$ into
$\mathcal{B}$. First we note that by the estimates of
Proposition~\ref{prop:bdecay} we have
$F(0) \in C^{0,\alpha}_{\delta'-2,\tau-2}$ for some $\delta' < \delta$
(increase $\delta$ if necessary) sufficiently close to $\delta$. It
follows that
\[
\|F(0)\|_{C^{0,\alpha}_{\delta,\tau-2}(\rho^{-1}[A,\infty))} <
CA^{\delta'-\delta}.
\]
Combining the estimates above, we have that if $u \in \mathcal{B}$, then
\begin{align*}
\|N(u)\|_{C^{2,\alpha}_{\delta,\tau}}
&\le \|N(0)\|_{C^{2,\alpha}_{\delta,\tau}} + \|N(u)-N(v)\|_{C^{2,\alpha}_{\delta,\tau}} \\
&\le \|F(0)\|_{C^{0,\alpha}_{\delta,\tau-2}(\rho^{-1}[A,\infty))} +
\frac{1}{2}\|u\|_{C^{2,\alpha}_{\delta,\tau}} \\
&\le \max\{1,b\}CA^{\delta'-\delta} + \frac{\epsilon_0}{2}.
\end{align*}
We see that to make $N$ maps into $\mathcal{B}$, we need to remove larger and
larger compact subsets as $b$ gets larger. In sum we can make $N$ a
contraction mapping by choosing $A$ sufficiently large (depending on
$b$). Thus there exists
$u \in C^{k,\alpha}_{\delta,\tau}(\rho^{-1}[A,\infty))$ with
$\|u\|_{C^{k,\alpha}_{\delta,\tau}} < \epsilon_0$ such that
\[
(\omega_b + \bddbar u)^3 = \sqrt{-1} \Omega_b \wedge \overline\Omega_b.
\]
on $\rho^{-1}[A,\infty)$. Pushing forward to $X_{1,b}$, we have
\[
(\ddbar ((\Phi + u)\circ G^{-1}))^3 =
\sqrt{-1}\Omega \wedge\overline\Omega
\]
on $\rho^{-1}[A,\infty)$. We can modify the K\"ahler potential so that
it defines a K\"ahler potential $\tilde{\Phi}_b$ on $X_{1,b}$ such
that $\ddbar \tilde{\Phi}_b$ agrees with
$\ddbar ((\Phi + u)\circ G^{-1})$ on $\rho^{-1}[2A,\infty)$. Set
$\tilde{\omega}_b = \ddbar \tilde{\Phi}_b$.

We now apply Hein's version~\cite{Hein} of the Tian-Yau
perturbation. Recall that $(X_1 \cap \rho^{-1}[A,\infty), \omega)$ is
covered by sets of type $\mathcal{U}$ and type $\mathcal{V}$ as in
Proposition~\ref{awaysingular} and Proposition~\ref{nearsingular},
respectively. Pulling back using the nearest point projection, it
follows that the same holds for
$(X_{1,b} \cap \rho^{-1}[2A,\infty),\tilde{\omega}_b)$. We can rescale
accordingly to compare in each region to the model geometries $X_0$
and $\mathbf{C} \times V_1$. The rescaled coordinates then give the
desired $C^{3,\alpha}$ coordinates. For the compact part we simply
cover it with a finite number of coordinate balls. This shows that
$(X_{1,b},\tilde{\omega}_b)$ admits a $C^{3,\alpha}$ quasi-atlas. That
$\tilde{\omega}_b$ is $\mathrm{SOB}(6)$ follows from that
$\tilde{\omega}_b$ is Ricci-flat outside a compact subset and the
tangent cone at infinity is $X_0$, which together imply maximal volume
growth by Colding's volume convergence~\cite{Colding}. We can then
apply \cite[Proposition~4.1]{Hein} to perturb $\tilde{\omega}$ to a
genuine Calabi-Yau metric $\omega_{1,b} = \ddbar \varphi_{1,b}$ on
$X_{1,b}$. When $b=0$, this recovers the Calabi-Yau metric constructed
on $X_1 = X_{1,0}$ in \cite{Sz19}.

A few notes about this construction are in order. First, this
construction should generalize to construct families of Calabi-Yau
metrics asymptotic to $\mathbf{C} \times A_k, k \ge 3$, including the
ones constructed in \cite{Sz19}. One could consider hypersurfaces in
$\mathbf{C}^4$ given by
$az + b_1y + b_2y^2 + \ldots + b_{k-2}y^{k-1} + x_1^2 + x_2^2 +
y^{k+1} = 0$, with $a \ne 0 \in \mathbf{C}$ and $b_i \in
\mathbf{C}$. Second, what we know about these metrics $\omega_{1,b}$
for now is that they are unique up to subquadratic perturbation of the
K\"ahler potential by \cite[Theorem 1.3]{CSz}. So a small perturbation
of the initial data or the choice of the right inverse of the
Laplacian does not affect the resulting metric. It is not clear at
this moment whether $\omega_{1,b}$ and $\omega_{1,b'}$ are related by
an automorphism of $\mathbf{C}^3$ up to scaling, because the
construction involves nearest point projections which are not even
holomorphic to begin with. To distinguish them we need to exploit the
explicit nature of the asymptotics.

More generally, we would like to know if the gluing construction above
gives all the Calabi-Yau metrics on $\mathbf{C}^3$ with tangent cone
$\mathbf{C} \times A_2$ at infinity. We will discuss some preliminary
results in the next section.

\section{Distinguishing the metrics}\label{sec:ds}

In this section, we conclude the proof of Theorem~\ref{thm:A2}, and
discuss some preliminary results about Conjecture~\ref{conj:A2}
below. 

Uniqueness results in singular perturbation problems are usually hard
to obtain, and very few results in the Calabi-Yau setting are
known. We would like to follow a similar strategy in \cite{Sz20} to
study the classification problem in our case. For this we first
compute subquadratic harmonic functions on the cone
$\mathbf{C} \times A_2$.

\subsection{Subquadratic harmonic functions on cones}

We first recall the following characterization of subquadratic
harmonic functions of Calabi-Yau cones $C(Y)$:

\begin{lemma}\label{lemma:HS}
  Suppose $C(Y)$ is a metric tangent cone of a non-collapsed
  Gromov-Hausdorff limit of K\"ahler-Einstein manifolds. Let $r$
  denote the radial coordinate so that $r\partial_r$ is the homothetic
  vector field. Let $J$ denote the complex structure. Suppose $u$ is a
  harmonic function on $C(Y)$. Then we have the following:
  \begin{enumerate}
  \item If $u$ is $s$-homogeneous ($r\partial_r u = s u$)
    with $s< 2$, then $u$ is pluriharmonic.
  \item If $u$ is $2$-homogeneous harmonic, then $u = u_1 + u_2$,
    where $u_1$ is pluriharmonic, and $u_2$ is
    $J(r\partial_r)$-invariant.
  \item The space of real holomorphic vector fields that commute with
    $r\partial_r$ can be written as
    $\mathfrak{p}\oplus J\mathfrak{p}$, where $\mathfrak{p}$ is
    spanned by $r\partial_r$ and vector fields of the form $\nabla u$,
    where $u$ is a $J(r\partial_r)$-invariant harmonic function
    homogeneous of degree $2$. $J\mathfrak{p}$ consists of real
    holomorphic Killing vector fields.
  \end{enumerate}
\end{lemma}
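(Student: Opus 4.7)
The plan is to reduce all three statements to the Donaldson--Sun approximation theorem (\cite{DS17}) for harmonic functions on non-collapsed K\"ahler--Einstein tangent cones: every polynomial-growth harmonic function on $C(Y)$ is the real part of a holomorphic function of the same homogeneity, except for a distinguished class of $2$-homogeneous $J(r\partial_r)$-invariant functions corresponding to K\"ahler-potential-type data.

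For part (1), an $s$-homogeneous harmonic $u$ with $s<2$ is therefore of the form $u=\operatorname{Re}F$ for an $s$-homogeneous holomorphic $F$ on $C(Y)$. Since $\partial\bar\partial\operatorname{Re}F=0$, this gives $u$ pluriharmonic, which is exactly what (1) asserts.

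For part (2), decompose a $2$-homogeneous harmonic $u$ into Fourier components under the isometric Reeb action generated by $\xi=J(r\partial_r)$; since $[\xi,\Delta]=0$ and $[\xi,r\partial_r]=0$, each component remains $2$-homogeneous and harmonic. The zero-mode is $\xi$-invariant and is taken as $u_2$. On a K\"ahler cone, a $2$-homogeneous holomorphic function $F$ satisfies $\xi F=2iF$, so by the approximation theorem the only nonzero Fourier modes of $u$ must sit at eigenvalues $\pm 2i$, and those modes pair up into $u_1=2\operatorname{Re}F$ for some $2$-homogeneous holomorphic $F$, which is pluriharmonic. Harmonicity of $u_2$ is automatic because the Reeb Fourier decomposition commutes with $\Delta$.

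For part (3), use the standard correspondence on the K\"ahler cone between real holomorphic vector fields commuting with $r\partial_r$ and $2$-homogeneous potentials: such an $X$ produces a complex-valued $2$-homogeneous potential whose real and imaginary parts are both harmonic (the real part coming from $X$ itself, the imaginary part from $JX$, via the K\"ahler moment-map formalism). Applying (2) to this potential translates the splitting of $2$-homogeneous harmonic functions into pluriharmonic and $\xi$-invariant pieces into the Lie-algebraic decomposition $\mathfrak{p}\oplus J\mathfrak{p}$: $\xi$-invariant harmonic potentials produce fields that preserve both $J$ and $\omega$, i.e.\ real holomorphic Killing fields in $J\mathfrak{p}$, while the pluriharmonic part contributes the ``scaling-type'' generators in $\mathfrak{p}$. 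The Reeb field $\xi=J(r\partial_r)$ and its $J$-partner $r\partial_r=\nabla(r^2/2)$ are included separately because they arise from the K\"ahler potential $r^2/2$, which is $2$-homogeneous and $\xi$-invariant but \emph{not} harmonic, which is why $r\partial_r$ must be adjoined by hand in the statement. The main obstacle is really the Donaldson--Sun approximation itself, which rests on Cheeger--Colding regularity for non-collapsed K\"ahler--Einstein limits; once that input is in place, the rest is Fourier decomposition along $\xi$ plus the standard K\"ahler--Hamiltonian dictionary.
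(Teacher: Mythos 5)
The paper does not prove this lemma itself; it defers to \cite[Lemma~3.1]{CSz} and the references therein (Conlon--Hein, Hein--Sun, Liu--Sz\'ekelyhidi), where the argument is a Bochner/integration-by-parts computation. Your proposal has a genuine gap at its foundation: the ``Donaldson--Sun approximation theorem for harmonic functions'' that you use as the engine for all three parts does not exist in the form you state. What \cite{DS17} provides is a way to produce holomorphic functions on nearby manifolds approximating holomorphic functions on the cone; it says nothing about an arbitrary $s$-homogeneous \emph{harmonic} function being the real part of a holomorphic one. That assertion is precisely the content of parts (1) and (2), so invoking it is circular. The standard proof instead studies the $(1,1)$-form $\eta=\sqrt{-1}\partial\bar\partial u$, which is harmonic (by the K\"ahler identities and Ricci-flatness) and homogeneous with $|\eta|\sim r^{s-2}$: for $s<2$ a separation-of-variables/spectral argument on the link together with integration by parts over annuli forces $\eta\equiv 0$, giving (1); for $s=2$ the same computation shows $\eta$ is parallel, and subtracting the (necessarily $J(r\partial_r)$-invariant) potential of this parallel form yields the splitting $u=u_1+u_2$ of (2), with (3) following from the Hamiltonian/moment-map correspondence applied to $u_2$. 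Without this analytic input, your Fourier decomposition along the Reeb flow cannot rule out modes at eigenvalues other than $0,\pm 2i$, nor show that the $\pm 2i$ modes are pluriharmonic rather than merely harmonic.

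Two smaller points. First, your observations that $\xi F=2iF$ for $2$-homogeneous holomorphic $F$ and that $r^2$ is $\xi$-invariant but not harmonic (so $r\partial_r$ must be adjoined by hand to $\mathfrak{p}$) are correct and worth keeping. Second, in part (3) the dictionary is stated backwards: in the lemma $\mathfrak{p}$ consists of $r\partial_r$ together with the gradients $\nabla u$ of $\xi$-invariant harmonic $2$-homogeneous functions --- these are the non-Killing, ``scaling-type'' fields --- and it is their rotations $J\nabla u$ that give the holomorphic Killing fields in $J\mathfrak{p}$; the pluriharmonic piece $u_1$ does not contribute holomorphic vector fields at all, since $\nabla^{1,0}(\operatorname{Re}F)$ is not holomorphic in general.
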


For a proof, see \cite[Lemma~3.1]{CSz} and the references therein. We
apply this lemma to systematically calculate subquadratic harmonic
functions on $C(Y)$. First we note that since $C(Y)$ is an affine
variety~\cite{LSz}, Lemma~\ref{lemma:HS} (1) and (2) imply that many
of these subquadratic harmonic functions are given by the real part of
subquadratic holomorphic functions. We are more interested in
quadratic harmonic functions whose gradients generate automorphisms of
$C(Y)$. For this we use Lemma~\ref{lemma:HS} (3) and turn to real
holomorphic vector fields. We note that $\mathfrak{p}$ has another
characterization:
\begin{align*}
\mathfrak{p} = \{ V: V \text{ is real holomorphic with linear growth
  and } JV(r^2) = 0\}.
\end{align*}

Since $C(Y)$ is an affine variety, it is useful to find $W = V-iJV$ first
and then take the real part of $W$. We follow this approach and
calculate a few examples relevant to this paper.

\begin{exmp}
  Let us consider $C(Y) = \mathbf{C} \times A_1$, defined as the
  hypersurface
  $\{x_1^2+\ldots + x_n^2 = 0\} \subset \mathbf{C} \times
  \mathbf{C}^n$, $n \ge 3$. $C(Y)$ is equipped with a Ricci-flat
  K\"ahler metric
\begin{align*}
\omega_0 = \ddb(|z|^2 + |x|^{2\frac{n-2}{n-1}}),
\end{align*}
where the coordinate $z$ has weight $1$ and the coordinates $x_i$ have
weight $(n-1)/(n-2)$. Any (complex) holomorphic vector field $W$ in
$(\mathfrak{p}\oplus J\mathfrak{p})\otimes \mathbf{C}$ is given by
\begin{align*}
W = bz\partial_z + a_{ij}x_i\partial_{x_j},
\end{align*}
where the coefficients $b$ and $a_{ij}$ are such that
$W(x_1^2+\ldots+x_n^2) = 0$ and $\mathrm{Im} W (r^2) = 0$. From these
two equations, we get that $b$ and $\lambda$ are real, and that
$a_{ij} = \sqrt{-1}b_{ij}+ \lambda\delta_{ij}$, where
$(b_{ij}) \in \mathfrak{o}(n, \mathbf{R})$. Write
$W_1 = bz\partial_z + \lambda x_i\partial_{x_i}$, and
$W_2 = \sqrt{-1}b_{ij}x_i\partial_{x_j}$. Note that
$\mathrm{Re} W_2(r^2)$ does not contain the $|z|^2$ term, so in
particular it is not proportional to $r^2$. It follows that
$\mathrm{Re} W_2(r^2)$ is a harmonic function. It remains to look at
$W_1$. For $\mathrm{Re} W_1(r^2)$ to be a harmonic function, we need
\begin{align*}
\Delta \mathrm{Re} W_1(r^2) = \Delta \left(b|z|^2 +
\lambda\frac{n-2}{n-1}|x|^{2\frac{n-2}{n-1}}\right) = 2b +
2\lambda(n-2) = 0,
\end{align*}
and so $W_1 = (n-2)z\partial_z- x_i\partial_{x_i}$. The
corresponding harmonic functions are
\begin{align*}
u_1 &= W_1(r^2) = (n-2)|z|^2-\frac{n-2}{n-1}|x|^{2\frac{n-2}{n-1}}, \\
u_2 &= W_2(r^2) = \sqrt{-1}\frac{n-2}{n-1}|x|^{\frac{-2}{n-1}}b_{ij}x_i\bar{x}_j.
\end{align*}

In \cite{Sz20}, the same result is obtained using Fourier transform in
the $\mathbf{C}$-direction.
\end{exmp}

\begin{exmp}\label{exmp:a2}
  Let $A_2$ denote the singularity
  \begin{align*}
    \{ x_1^2+x_2^2+y^3 = 0 \} \subset \mathbf{C}^3.
  \end{align*}
  Then $A_2$ is isomorphic to $\mathbf{C}^2/\mathbf{Z}_3$ via the map
  \begin{align*}
    \mathbf{C}^2 &\to \mathbf{C}^3 \\
    (z_1,z_2) &\mapsto (\frac{z_1^3+z_2^3}{2}, \frac{z_1^3-z_2^3}{2\sqrt{-1}}, \zeta z_1z_2),
  \end{align*}
  where $\zeta$ is a cubic root of $-1$. The holomorphic volume form is
  given by
  \begin{align*}
    \Omega = \frac{dx_1\wedge dx_2}{3y^2}.
  \end{align*}
  Pulling $\Omega$ back to $\mathbf{C}^2$ gives a constant multiple of
  $dz_1\wedge dz_2$. The standard flat metric on $\mathbf{C}^2$ thus
  gives the correct Calabi-Yau cone metric on $A_2$. The potential
  $r^2$ on $A_2$, using the ambient coordinates $x_1, x_2$ and $y$, is
  given by
  \begin{align*}
    r^2 =
    &\left(|x_1|^2+|x_2|^2 + \sqrt{(|x_1|^2+|x_2|^2)^2- |y|^6}\right)^{1/3} \\
    &+ \left(|x_1|^2+|x_2|^2 - \sqrt{(|x_1|^2+|x_2|^2)^2- |y|^6}\right)^{1/3}.
  \end{align*}
  This can be seen by solving a cubic equation. The complexified radial
  vector field on $\mathbf{C}^2$, $z_i\partial_{z_i}$, pushes forward to
  \begin{align*}
    3x_1\partial_{x_1}+3x_2\partial_{x_2}+2y\partial_y.
  \end{align*}
  So $x_1, x_2$ have weight $3$ and $y$ has weight $2$. Alternatively,
  the weights can be read off from the complex Monge-Amp\`ere equation.

  Any (complex) holomorphic vector field of linear growth is given by
  \begin{align*}
    W = a_{ij}x_i\partial_{x_j} + by\partial_y,
  \end{align*}
  where the coefficients $a_{ij}$ and $b$ are chosen so that
  $W(x_1^2+x_2^2+y^3) = 0$ and $\mathrm{Im} W(r^2) = 0$. It follows that
  \begin{align*}
    W = by\partial_y + \sqrt{-1}b_{ij} x_i \partial_{x_j} + c x_i\partial_{x_i},
  \end{align*}
  where $b, c$ are real with $3b+2c = 0$ and $b_{ij}$ is real and
  skew-symmetric. Thus $W_1 = \sqrt{-1}b_{ij} x_i \partial_{x_j}$, and
  $W_2 = \frac{1}{3} y\partial_y + \frac{1}{2}
  x_i\partial_{x_i}$. $W_2$ is the (complexified) radial vector. The
  space of homogeneous ($Jr\partial_r$)-invariant quadratic growth
  harmonic functions on $A_2$ is generated by
  \[
    u_1 = W_1(r^2) =
    \frac{1}{3}\sqrt{-1}b_{ij}\frac{r^2}{\sqrt{(|x_1|^2+|x_2|^2)^2-|y|^6}}x_i\bar{x}_j.
  \]  
\end{exmp}

\begin{exmp}\label{exmp:cxa2}
  We now assume that our cone is $\mathbf{C} \times A_2$. Following
  the calculations in the previous examples, it is easily seen that
  the space of ($Jr\partial_r$)-invariant homogeneous harmonic
  functions with quadratic growth on
  \[
    \mathbf{C} \times A_2 = \{x_1^2+x_2^2+y^3=0\} \subset \mathbf{C}^4 = \mathbf{C} \times \mathbf{C}^3
  \]
  is generated by
  \begin{align*}
    u_1 &= \frac{1}{3}\sqrt{-1}b_{ij}\frac{r^2}{\sqrt{(|x_1|^2+|x_2|^2)^2-|y|^6}}x_i\bar{x}_j,\\
    u_2 &= 2|z|^2 - r^2,
  \end{align*}
  where $u_2$ corresponds to the vector
  \begin{align*}
    W_2 = z\partial_z - \frac{1}{2}(2y\partial_y + 3x_i\partial_{x_i}).
  \end{align*}

  Let us consider
  \begin{align*}
    V = \mathrm{Re}(z\partial_z + \frac{1}{3} y\partial_y + \frac{1}{2} x_i \partial_{x_i}).
  \end{align*}
  Then $L_V \Omega = n\beta \Omega$, and
  \begin{align*}
    V(|z|^2 + r^2) - \beta (|z|^2 + r^2) = \frac{5}{18} u_2,
  \end{align*}
  where $\beta = 4/9$.

  $V$ generates biholomorphisms
  \[\label{eq:aut}
    \Phi_t(z, x_1,x_2, y) = (e^{t/2}z, e^{t/4}x_1, e^{t/4}x_2,
    e^{t/6}y),
  \]
  where $t \in \mathbf{C}$.

  Let us recall the notion $X_0 = \mathbf{C} \times A_2$ and $X_{1,b}$
  in the previous sections. The automorphisms $\Phi_t$ fix $X_0$, and
  move $X_{1,b}$:
  \[
    \Phi_t(X_{1,b}) = X_{1,e^{t/3}b}.
  \]
  Thus the only $X_{1,b}$ that is fixed by $\Phi_t$ is $X_{1,0}$.

  The effect of the automorphism $\Phi_t$ on the cone metric and the
  holomorphic volume form on $X_0$ is seen as
  \begin{align*}
    \Phi_t^* (|z|^2 + r^2) &= e^t |z|^2 + e^{t/6} r^2, \\
    \Phi_t^*( dx_1 \wedge dx_2 \wedge dy) &= e^{2t/3} dx_1 \wedge dx_2 \wedge dy.
  \end{align*}
  So
  \begin{align*}
    e^{-4t/9} \phi_t^* (|z|^2 + r^2) = e^{5t/9} |z|^2 + e^{-5t/18} r^2
  \end{align*}
  defines a Calabi-Yau cone metric on $X_0$ with the same volume form as
  that of $|z|^2 + r^2$. Taking Taylor expansion, we have
  \begin{align*}
    e^{-4t/9} \Phi_t^* (|z|^2 + r^2) = (|z|^2 + r^2) + \frac{5}{18}u_2 t + O(t^2).
  \end{align*}

  It follows that up to first order, perturbing the cone metric by
  $u_2$ corresponds to applying the automorphism $\Phi_t$ and
  rescaling. As in \cite{Sz20}, the reason we want to consider $V$ in
  place of $\operatorname{Re} W_2$ is that $W_2$ does not fix any of
  $X_{1,b}$. Since the automorphisms $\Phi_t$ fix the hypersurface
  $X_1=X_{1,0}$, we can still prove a result similar to \cite{Sz20}
  (see Proposition~\ref{prop:x1unique} below).
\end{exmp}

\subsection{Donaldson-Sun theory}

We now apply Donaldson-Sun theory \cite{DS17} to construct sequences
of special embeddings of $\mathbf{C}^3$ into $\mathbf{C}^4$ using
holomorphic functions with polynomial growth. The following is similar
to \cite[Proposition~3.1]{Sz20}:

\begin{prop}
\label{specialembeddings}
Suppose $X = \mathbf{C}^3$ is equipped with a Calabi-Yau metric
$\omega$ with $\mathbf{C} \times A_2$ as tangent cone at
infinity. Then there exists a sequence of holomorphic embeddings
$F_i: X \to \mathbf{C}^4$ with the following properties:
\begin{enumerate}
\item On the ball $B_i$, the map $F_i$ gives a
$\Psi(i^{-1})$-Gromov-Hausdorff approximation to the embedding
$B(0,1) \to \mathbf{C}^4$, where $B(0,1)$ is the unit ball in
$\mathbf{C} \times A_2$.

\item The image $F_i(X)$ is given by the equation
\begin{align*}
a_i z + b_i y + x_1^2 + x_2^2 + y^3 = 0,
\end{align*}
for some $a_i > 0, b_i \ge 0$. Either all $b_i = 0$ or all
$b_i \ne 0$.
\item There exists a point $o \in X$ such that $F_i(o) = 0$ for all
$i$.
\item The volume form $\omega^3$ satisfies
\begin{align*}
2^{-6i} \omega^3 = F_i^*(\sqrt{-1} \Omega \wedge \overline{\Omega}),
\end{align*}
where $\Omega = a_i^{-1}dx_1\wedge dx_2 \wedge dy$ is the holomorphic
volume form on $X_{a_i,b_i} = F_i(X)$.
\item $a_i/a_{i+1} \to 2^5$ and
$b_i/b_{i+1} = 2^{3/2}(a_i/a_{i+1})^{1/2} \to 2^4$ (when $b_i \ne 0$)
as $i \to \infty$. Furthermore, the number $b=b_ia_i^{-1/2}2^{3i/2}$
is independent of $i$ and independent of the sequence.
\end{enumerate}
We call any sequence of embeddings satisfying the above properties a
sequence of special embeddings.
\end{prop}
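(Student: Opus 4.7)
The plan is to adapt the construction of special embeddings from \cite[Proposition~3.1]{Sz20}, which handles the $\mathbf{C}\times A_1$ setting, to the present $\mathbf{C}\times A_2$ setting. The main inputs are Donaldson-Sun theory \cite{DS17} applied to $(X,\omega)$ together with the algebraic structure of $\mathbf{C}\times A_2$ as an affine variety. Since $X_0=\mathbf{C}\times A_2$ embeds minimally in $\mathbf{C}^4$ via four weighted coordinates of weights $1,3,3,2$ subject to the single relation $x_1^2+x_2^2+y^3=0$, I would produce, at each dyadic scale $\lambda_i=2^i$, four holomorphic functions on $X$ of the corresponding polynomial growth rates that approximate these coordinates over $B_i$. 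These are constructed from peak holomorphic sections via H\"ormander $L^2$-estimates on the rescaled manifold $(X,\lambda_i^{-2}\omega)$, and property~(1) is then immediate from the quantitative form of Donaldson-Sun convergence together with Corollary~\ref{cor:asymptotics}.

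Next I would check that the resulting map $F_i:X\to\mathbf{C}^4$ is a biholomorphism onto a smooth affine hypersurface whose defining equation takes the canonical form in~(2). Any monomial in the defining polynomial has weighted degree at most $6$, and by making affine changes among the four generators -- shifts of $z$ by polynomials in $y$ of weight $<6$, a shift of $y$ by a constant, a rotation in the $(x_1,x_2)$-plane, and a unitary rescaling of $z$ -- every such monomial except $z$, $y$ and the three cone terms can be eliminated, yielding $a_iz+b_iy+x_1^2+x_2^2+y^3=0$ with $a_i>0$ and $b_i\ge 0$. The dichotomy ``either all $b_i=0$ or all $b_i\ne 0$'' reflects whether the obstruction to producing a $y$-free model vanishes uniformly in $i$, a property of $(X,\omega)$ itself. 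The basepoint condition $F_i(o)=0$ is arranged by a single overall translation in $\mathbf{C}^4$. Property~(4) then becomes a matching of normalizations: $\Omega=a_i^{-1}\,dx_1\wedge dx_2\wedge dy$ arises as the Poincar\'e residue on $X_{a_i,b_i}$, and the peak sections are scaled so that $F_i^*(\sqrt{-1}\Omega\wedge\overline\Omega)=2^{-6i}\omega^3$, which is consistent with the scaling of volume forms on $(X,\lambda_i^{-2}\omega)$.

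Property~(5) would then follow by observing that the $\mathbf{C}^*$-action $\lambda\cdot(z,x_1,x_2,y)=(\lambda z,\lambda^3 x_1,\lambda^3 x_2,\lambda^2 y)$ transforms $az+by+x_1^2+x_2^2+y^3=0$ into $a\lambda^{-5}z+b\lambda^{-4}y+x_1^2+x_2^2+y^3=0$. Applying this with $\lambda=2$ to compare $F_i$ and $F_{i+1}$ forces the asymptotic ratios $a_i/a_{i+1}\to 2^5$ and $b_i/b_{i+1}\to 2^4$, and the combination $b_ia_i^{-1/2}2^{3i/2}$ is exactly invariant under this rescaling. Independence of this combination from the choice of sequence reduces to the observation that any two sequences of special embeddings differ at large scales by an automorphism of $X_0$ preserving the pointed, volume-normalized data, which by Example~\ref{exmp:cxa2} must lie in the one-parameter family $\Phi_t$ and acts trivially on the scale-invariant ratio $b$. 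The main obstacle I anticipate is the reduction to canonical form in~(2): it requires showing that the ring of polynomial growth holomorphic functions on $(X,\omega)$ is generated in weights $\le 3$ by exactly four elements modulo the single cone relation, and that every monomial of weighted degree $\le 6$ can be absorbed into the chosen generators up to terms captured by $a_i$ and $b_i$ -- a precise matching with the algebraic structure of $X_0$ that rests on Donaldson-Sun's identification of the tangent cone as an affine variety.
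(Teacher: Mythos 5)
There is a genuine gap at the heart of your argument for property~(2). After the elementary reductions (completing the square in $x_1,x_2$, shifting $y$ to remove $y^2$-terms, etc.) the defining equation only reduces to
\begin{equation}
x_1^2+x_2^2+y^3+f(z)\,y+g(z)=0,
\end{equation}
with $\deg f\le 4$ and $\deg g\le 6$, and your claim that ``every such monomial except $z$, $y$ and the three cone terms can be eliminated'' by further affine changes among the weighted generators is false: no weight-preserving change of variables removes, say, a $z^2$ or $z^2y$ term (substituting $\tilde z = a z + cz^2$ would destroy the growth rate~$1$ of the generator and hence the convergence to the cone). The paper rules out these extra terms not by algebra but by topology, and this is where the hypothesis $X\cong\mathbf{C}^3$ enters essentially: if $f$ were nonconstant or $g$ nonlinear, then after assigning suitable weights the hypersurface would be a smoothing of a singularity of the form $x_1^2+x_2^2+y^3+z^ky=0$, $x_1^2+x_2^2+y^3+z^l=0$, or $x_1^2+x_2^2+y^3+az^2y+bz^3=0$; by Milnor's fibration theorem (and Siersma's result in the isolated line singularity case $27b^2+4a^3=0$) such a smoothing is homotopy equivalent to a nontrivial bouquet of spheres, hence not homeomorphic to $\mathbf{C}^3$. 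Without this step your canonical form, and therefore the entire definition of the invariants $a_i, b_i$, is not established.

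Two smaller points. For property~(3) you cannot simply apply ``a single overall translation in $\mathbf{C}^4$'': an arbitrary translation destroys both the form of the equation and the Gromov--Hausdorff approximation, and moreover you need one point $o$ working for \emph{all} $i$ simultaneously. The paper gets this by showing that any two generating quadruples with the prescribed weights are related by scalars ($\tilde z=k_1z$, $\tilde y=k_2y$, $(\tilde x_1,\tilde x_2)=A(x_1,x_2)$ with $A^TA=k_3^2\mathrm{Id}$), which forces a common zero $o$ and shows the constant term $e_i$ satisfies $a_i^{-1}e_i\to 0$, so it can be absorbed into $z$. The same scalar relations, not a vague ``uniform obstruction,'' give the dichotomy that either all $b_i=0$ or none are, and they are also what makes the quantity $b=b_ia_i^{-1/2}2^{3i/2}$ independent of $i$ and of the sequence. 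Your discussion of (1), (4) and the asymptotic ratios in (5) is otherwise consistent with the paper's use of adapted bases, Colding's volume convergence, and the limits $k_1\to 2^{-1}$, $k_2\to 2^{-2}$, $k_3\to 2^{-3}$.
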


\begin{proof}
  The proof follows a similar strategy of \cite[Proposition
  3.1]{Sz20}.  Let $x_1, x_2, y, z$ be holomorphic functions on the
  cone $X_0 = \mathbf{C} \times A_2$ with weight $3,3,2,1$,
  correspondingly. Recall that the defining equation is given by
  $x_1^2+x_2^2+y^3=0$. Let $F_i = (x_1^i, x_2^i, y^i, z^i)$ be the
  sequence of holomorphic embeddings of $X$ into $\mathbf{C}^4$, where
  the components have weights $3,3,2,1$ respectively, such that over
  the balls $B(p_i, 1) = B(p, 2^{i})$ scaled down to unit size, $F_i$
  converge in the Gromov-Hausdorff sense to $F = (x_1,x_2,y,z)$, the
  embedding of $X_0$ to $\mathbf{C}^4$. Such a sequence of embeddings
  can be obtained using adapted sequences of bases for holomorphic
  functions (see \cite[Proposition~3.26]{DS17}). By comparing
  dimensions of the corresponding spaces of holomorphic functions with
  polynomial growth, we see that $x_1^i, x_2^i, y^i, z^i$ must satisfy
  a polynomial equation, each term of which has weight at most
  $6$. For notational simplicity we suppress the index $i$ in the
  discussion below. By making a change of variables that does not
  change the weights of the variables (i.e. completing the squares to
  kill off the terms $x_1,x_2, x_1x_2$ and a shift in $y$ by a scalar
  to kill off the $y^2$ term), the equation reduces to
\begin{align*}
x_1^2+x_2^2+y^3+f(z)y+g(z) = 0,
\end{align*}
where $f(z)$ is a polynomial of degree at most $4$ and $g(z)$ is a
polynomial of degree at most $6$. We claim that $f(z)$ can only be a
constant and that $g(z$) can only be linear. Otherwise, by putting
suitable weights to $x_1,x_2,y,z$, we may assume that the variety
degenerates to one of the following singular hypersurfaces:
\begin{itemize}
\item $x_1^2+x_2^2+y^3+z^ky=0$,
\item $x_1^2+x_2^2+y^3+z^l=0$,
\item $x_1^2+x_2^2+y^3+az^2y+bz^3=0, a,b \ne 0$,
\end{itemize}
where $1 \le k \le 4$ and $2 \le l \le 6$ ($l = 1$ is biholomorphic to
$\mathbf{C}^3$).

In the first two cases, the Milnor number of each isolated singularity
is positive. By Milnor's fibration theorem \cite{Milnor}, the
smoothing has nontrivial topology. In fact, it is homotopy equivalent
to a bouquet of spheres, where the number of spheres is given by the
Milnor number. Therefore it cannot be homeomorphic to
$\mathbf{C}^3$. In the third case, if $27b^2 + 4a^3 \ne 0$ then we
again have an isolated singularity (it is the three-dimensional
$A_2$). If $27b^2 + 4a^3 = 0$, then we have an isolated line
singularity of the form $x_1^2 + x_2^2 + vw^2 = 0$ after a change of
variables. In this case the Milnor fiber is still homotopy equivalent
to a bouquet of spheres \cite{Siersma}. It follows that $f(z)$ can
only be a constant, and $g(z)$ must be linear. For now we conclude
that the image of $F_i$ in $\mathbf{C}^4$ is given by
\[
e_i + a_i z + b_i y + x_1^2 + x_2^2 + y^3 = 0,
\]
where $e_i, a_i$ and $b_i$ are complex numbers. To kill off the
constant term, we make a change of variables $z \to z +
a_i^{-1}e_i$. We need to ensure that that $a_i^{-1}e_i \to 0$ as
$i \to 0$. As pointed out in \cite[Lemma 5]{Sz20}, if we have two sets
of of holomorphic functions $(z, x_1,x_2,y)$ and
$(\tilde{z}, \tilde{x}_1, \tilde{x}_2, \tilde{y})$ on $X$ with weights
$(1,3,3,2)$ such that
\[
az + by + x_1^2 + x_2^2 + y^3 = 0
\]
and
\[
c\tilde{z} + d\tilde{y} + \tilde{x}_1^2 + \tilde{x}_2^2 + \tilde{y}^3 =
0,
\]
then using the fact that both sets of holomorphic functions generate
the space of holomorphic functions with growth rates $\le 6$, we see
that $\tilde{z} = k_1 z$, $\tilde{y} = k_2 y$ and
$(\tilde{x}_1,\tilde{x}_2) = A(x_1,x_2)$ for some scalars $k_1, k_2$
and an invertible matrix $A$ with $A^TA = k_3^2\mathrm{Id}$. We may
assume $A = k_3 \mathrm{Id}$ for some scalar $k_3$. From this it
follows that the two sets of holomorphic functions have a common zero
$o \in X$. Since $F_i$ converges to the standard embedding
$F: B(0,1) \to \mathbf{C}^4$ of the cone, it follows that
$F_i(o) \to 0 \in \mathbf{C}^4$. This implies that
$a_i^{-1}e_i \to 0$. Thus we can absorb this small constant term to
$z$. We make a stop and conclude what we got so far:
\begin{itemize}
\item A sequence of embeddings $F_i = (x_1^i,x_2^i,y^i,z^i)$ of $X$
into $\mathbf{C}^4$ such that $F_i$ converges to $F$ over
$B(p_i,1) \to B(0,1)$.
\item The image $F_i(X)$ is given by the equation
\[
a_i z + b_i y+ x_1^2 + x_2^2 + y^3 = 0,
\]
with $a_i, b_i \to 0$ as $i \to 0$.
\item There exists $o \in X$ such that $F_i(o) = 0$.
\end{itemize}
We still need to conclude (4) and (5) in the statement of the
proposition. Pulling back the volume form using $F_i$, we have
\[
F_i^*(\sqrt{-1}\Omega\wedge\overline\Omega) = |g_i|^2 \omega^3
\]
for some nowhere vanishing polynomial growth holomorphic function
$g_i$ on $X$. Therefore $g_i$ must be a constant (recall that $X$ is
biholomorphic to $\mathbf{C}^3$). By Colding's volume convergence,
\[
2^{-6i} \int_{B(p,2^{2i})} \omega^3 \to \int_{B(0,1)} F^*(\sqrt{-1}\Omega
\wedge \overline\Omega),
\]
it follows that $2^{6i}|g_i|^2 \to 1$ as $i\to 0$. Scaling $z$ by a
factor $\Psi(i^{-1})$-close to $1$, we may assume $|g_i|^2 =
2^{-6i}$. Finally, the image of $F_i$ and $F_{i+1}$ are given by
\[
a_iz + b_iy + x_1^2 + x_2^2 + y^3 = 0
\]
and
\[
a_{i+1}z + b_{i+1}y + x_1^2 + x_2^2 + y^3 = 0,
\]
respectively.  Using the argument finding $o$ such that $F_i(o) = 0$
above, we see that the coefficients of these equations satisfy
\[
\frac{a_i}{k_1a_{i+1}}= \frac{b_i}{k_2b_{i+1}} = \frac{1}{k_2^3} = \frac{1}{k_3^2},
\]
where $k_i$ are such that
$z^{i+1} = k_1z^i, y^{i+1} = k_2 y^i, (x_1^{i+1}, x_2^{i+1}) =
k_3(x_1^i, x_2^i)$. By the definition of adapted sequences of bases
(\cite[Proposition~3.26]{DS17}), we have
$k_1 \to 2^{-1}, k_2 \to 2^{-2}, k_3 \to 2^{-3}$ as $i \to
\infty$. The negative of the powers of $2$ here are the respective
growth rates of the functions. From these, along with the relation
given by the volume forms
\[
2^{6i}F_i^*(\sqrt{-1}\Omega\wedge\overline\Omega) =
2^{6(i+1)}F_{i+1}^*(\sqrt{-1}\Omega\wedge\overline\Omega),
\]
we deduce the limits of $a_i/a_{i+1}$ and $b_i/b_{i+1}$. The same
method shows that $b$ is independent of the sequence constructed
here. Finally, to make $a_i > 0$ we simply rotate the $z$ variable. To
make $b_i \ge 0$, we compose $F_i$ with the following linear
automorphism of $\mathbf{C}^4$:
\[
G_i(z,x_1,x_2,y) = (e^{t_i/2}z, e^{t_i/4}x_1, e^{t_i/4}x_2, e^{t_i/6}y)
\]
(this is $\Phi_t$ in Example~\ref{exmp:cxa2}) for some suitable
$e^{t_i} \in S^1$. Note that $G_i$ preserves the volume form.
\end{proof}

From the proposition we immediately have the following:

\begin{cor}\label{cor:distinguish}
  Suppose $\omega,\omega'$ are two isometric Calabi-Yau metrics on
  $\mathbf{C}^3$ with tangent cone $\mathbf{C} \times A_2$ at
  infinity. Then $b = b'$ in Proposition~\ref{specialembeddings}.
\end{cor}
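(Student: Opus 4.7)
The strategy is to leverage Proposition~\ref{specialembeddings}(5), which asserts that the invariant $b = b_i a_i^{-1/2} 2^{3i/2}$ depends on the metric only, and is independent of the chosen sequence of special embeddings. Given an isometry $\psi : (\mathbf{C}^3, \omega) \to (\mathbf{C}^3, \omega')$, I would pull back a sequence of special embeddings for $\omega'$ via $\psi$ to produce a sequence of special embeddings for $\omega$ with identical coefficients $a_i, b_i$, whence $b = b'$.

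First I would show that $\psi$ is, after possibly post-composing with complex conjugation on the target $\mathbf{C}^4$, a biholomorphism. Both metrics have holonomy in $SU(3)$, and in fact equal to $SU(3)$: if the holonomy were reducible, then the de Rham decomposition (applicable since $\mathbf{C}^3$ is simply connected) would force a Riemannian product structure, hence a corresponding splitting of the tangent cone. A nontrivial product decomposition of a complete Ricci-flat K\"ahler $3$-fold of maximal volume growth must involve a flat Euclidean factor, since the only complete Ricci-flat K\"ahler metric on $\mathbf{C}$ or $\mathbf{C}^2$ with maximal volume growth is flat; this produces a tangent cone with a smooth Euclidean factor of positive dimension, incompatible with $\mathbf{C} \times A_2$. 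Hence the parallel compatible almost complex structures are exactly $\pm J$, and $\psi^* J' = \pm J$. In the minus case, postcomposing with complex conjugation on $\mathbf{C}^4$ restores holomorphicity and maps $X_{1, b'}$ to $X_{1, \overline{b'}} = X_{1, b'}$, since $b' \ge 0$ is real by Proposition~\ref{specialembeddings}(2).

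Assuming $\psi$ is a holomorphic isometry, choose a sequence of special embeddings $F_i' : (\mathbf{C}^3, \omega') \to \mathbf{C}^4$ based at $o' \in \mathbf{C}^3$ with coefficients $a_i', b_i'$, and set $F_i = F_i' \circ \psi$ and $o = \psi^{-1}(o')$. Then $F_i$ is a holomorphic embedding of $\mathbf{C}^3$ into $\{a_i' z + b_i' y + x_1^2 + x_2^2 + y^3 = 0\}$ with $F_i(o) = 0$, and $\psi$ maps $B_\omega(o, 2^i)$ isometrically to $B_{\omega'}(o', 2^i)$, so the Gromov-Hausdorff approximation property of Proposition~\ref{specialembeddings}(1) transfers from $F_i'$ to $F_i$; the volume form identity of Proposition~\ref{specialembeddings}(4) becomes
\[
F_i^*(\sqrt{-1}\Omega \wedge \overline\Omega) = \psi^*(F_i')^*(\sqrt{-1}\Omega \wedge \overline\Omega) = \psi^*(2^{-6i}(\omega')^3) = 2^{-6i}\omega^3.
\]
Thus $\{F_i\}$ is a sequence of special embeddings for $\omega$ with $a_i = a_i'$ and $b_i = b_i'$, and Proposition~\ref{specialembeddings}(5) applied to this sequence yields $b = b'$. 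The one substantive step I expect to require care is the $\pm J$-linearity of $\psi$, specifically ruling out reducible holonomy from the tangent cone structure; the remaining checks are functorial under isometric biholomorphisms.
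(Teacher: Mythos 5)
Your overall route is exactly what the paper intends: the corollary is stated as an immediate consequence of Proposition~\ref{specialembeddings}(5) (independence of $b$ from the choice of sequence), and transporting a sequence of special embeddings through the isometry is the way to make that precise; your extra care in first reducing a Riemannian isometry to a $\pm J$-holomorphic one is a reasonable addition that the paper leaves implicit. One justification in that added step is wrong as written, though: you rule out reducible holonomy by saying a de Rham splitting ``produces a tangent cone with a smooth Euclidean factor of positive dimension, incompatible with $\mathbf{C}\times A_2$'' --- but $\mathbf{C}\times A_2$ \emph{does} contain a flat factor $\mathbf{C}$, so the presence of a Euclidean factor is not by itself a contradiction. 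The correct way to close this case is to note that a splitting $\mathbf{C}_{\mathrm{flat}}\times M^2$ compatible with the tangent cone would force $M^2$ to be a complete Ricci-flat K\"ahler surface asymptotic to the cone $\mathbf{C}^2/\mathbf{Z}_3$, hence an ALE $A_2$ gravitational instanton with $b_2=2$ by Kronheimer's classification, contradicting the contractibility of $M^2$ as a factor of $\mathbf{C}^3$ (equivalently, by your own observation that a maximal-volume-growth Ricci-flat K\"ahler metric on a topologically trivial surface is flat, in which case the whole space is flat and the tangent cone is $\mathbf{C}^3$, not $\mathbf{C}\times A_2$). With that repair the argument is complete and consistent with the paper.
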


Note that Corollary~\ref{cor:distinguish} does not imply that the
metrics that we constructed in Theorem~\ref{thm:construction} are
distinct in our sense. Actually, if we apply an automorphism and also
a scaling to a metric in Corollary~\ref{cor:distinguish}, then its
invariant $b$ scales correspondingly.

When $b=0$, we have the following uniqueness result:

\begin{prop}\label{prop:x1unique}
  Let $X$ be a Calabi-Yau manifold biholomorphic to $\mathbf{C}^3$
  with tangent cone $\mathbf{C} \times A_2$ at infinity. If $b = 0$ in
  Proposition~\ref{specialembeddings}, then up to scaling, $X$ is
  isometric to $X_{1,0}$ equipped with the Calabi-Yau metric
  $\omega_{1,0}$ in Theorem~\ref{thm:construction}.
\end{prop}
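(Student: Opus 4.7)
The plan is to follow the strategy of Sz\'ekelyhidi \cite{Sz20} in the $\mathbf{C}\times A_1$ case: use the special embeddings of Proposition \ref{specialembeddings} to produce a biholomorphism $G_\infty : X \to X_{1,0}$, and then invoke the subquadratic uniqueness result \cite[Theorem 1.3]{CSz} to promote this into an isometry after scaling.

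First I would apply Proposition \ref{specialembeddings} to $(X, \omega)$ to obtain a sequence of special embeddings $F_i : X \to \mathbf{C}^4$ with images $X_{a_i, 0}$ (since $b = 0$ by assumption), so each $F_i$ is a biholomorphism of $X$ onto the hypersurface $X_{a_i, 0}\subset \mathbf{C}^4$, with $a_i / a_{i+1} \to 2^5$. Normalizing by the ambient $\mathbf{C}^*$ action $F_t$ of the Setup, one picks $t_i$ with $|t_i|^5 \sim a_i$ so that $G_i := F_{t_i} \circ F_i$ is a biholomorphism of $X$ onto $X_{1,0}$. Property (4) of Proposition \ref{specialembeddings}, combined with the fact that $F_{t_i}$ rescales the cone metric on $X_0$ uniformly by $|t_i|^2$, ensures that on every fixed compact subset $K \subset X$ the maps $G_i$ eventually take values in a bounded region of $X_{1,0}\subset \mathbf{C}^4$.

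Next I would extract a convergent subsequence. Because $G_i$ is holomorphic with $C^0$-bounds on compact subsets of $X$, the Cauchy integral formula in $\mathbf{C}^4$ upgrades this to uniform $C^k$-bounds for all $k$, and a diagonal Arzel\`a--Ascoli argument yields a holomorphic limit $G_\infty : X \to X_{1,0}$. The same argument applied to $G_i^{-1}$ produces a holomorphic inverse, so $G_\infty$ is a biholomorphism. At this stage one may still need to absorb an automorphism of $X_{1,0}$ coming from the flow $\Phi_t$ of Example \ref{exmp:cxa2} in order to align the asymptotic normalizations; since $\Phi_t$ stabilizes $X_{1,0}$ and preserves its volume form, this does not affect the preceding construction.

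For the metric identification, set $\tilde\omega := G_\infty^*\omega_{1,0}$. This is a complete Calabi-Yau metric on $X$ whose tangent cone at infinity is $\mathbf{C}\times A_2$, by Corollary \ref{cor:asymptotics} applied to $\omega_{1,0}$ and the fact that $G_\infty$ approximates the identity on the cone in the Gromov-Hausdorff sense. By comparing the volume-form normalizations given by Proposition \ref{specialembeddings}(4) for $\omega$ and for $\omega_{1,0}$, we may further rescale $\omega_{1,0}$ by a constant so that the K\"ahler potentials of $\tilde\omega$ and $\omega$ agree with the cone potential of $\mathbf{C}\times A_2$ up to subquadratic error. The uniqueness theorem \cite[Theorem 1.3]{CSz} then forces $\tilde\omega = \omega$, yielding the desired isometry.

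The hard part is the extraction of the convergent subsequence. Proposition \ref{specialembeddings}(1) only controls $F_i$ on the nested sequence of balls $B(o,2^i)$ scaled down to unit size, so one has to track carefully how the rescalings $F_{t_i}$ convert this into uniform bounds on a fixed exhaustion of $X$ in the ambient $\mathbf{C}^4$. A secondary technical point is ensuring that after all normalizations the difference $\tilde\omega - \omega$ really sits inside the subquadratic class required by \cite{CSz}; here the assumption $b = 0$ is essential, since it fixes the invariant discussed after Corollary \ref{cor:distinguish} and rules out any residual quadratic-order discrepancy in the asymptotic K\"ahler potentials.
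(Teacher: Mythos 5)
Your skeleton (special embeddings $\Rightarrow$ biholomorphism onto $X_{1,0}$ $\Rightarrow$ subquadratic closeness $\Rightarrow$ \cite[Theorem~1.3]{CSz}) has the right endpoints, but there is a genuine gap at the decisive middle step: you assert that after matching volume-form normalizations the potentials of $\omega$ and $G_\infty^*\omega_{1,0}$ ``agree with the cone potential up to subquadratic error,'' and nothing in your argument produces this. The hypothesis is only that the tangent cone at infinity is $\mathbf{C}\times A_2$; that gives Gromov--Hausdorff closeness to the cone on rescaled annuli with an unquantified rate $\Psi(i^{-1})$, which is entirely consistent with a quadratic-order discrepancy in the K\"ahler potential coming from the $2$-homogeneous harmonic functions on the cone computed in Example~\ref{exmp:cxa2} --- in particular $u_2=2|z|^2-r^2$, which is neither pluriharmonic nor Killing. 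The whole content of the argument of \cite{Sz20} that the paper transplants is an induction over dyadic annuli: at each scale one composes with an automorphism $\Phi_{t_i}$ and a scaling chosen to kill these quadratic modes, and a three-annulus/indicial-root argument shows the remaining discrepancy improves by a definite factor; only after summing the resulting geometric series does one land in a single gauge where the error is subquadratic and \cite{CSz} applies. The volume-form normalization in Proposition~\ref{specialembeddings}(4) fixes one real parameter and cannot control these modes, and $b=0$ constrains the complex structure of the image hypersurface, not the quadratic harmonic discrepancy of the metric. The fact that the $\Phi_t$ fix $X_{1,0}$ --- which you invoke only once, at the end, to ``align normalizations'' --- is exactly what makes the scale-by-scale absorption possible, and is the point the paper singles out as the reason the $A_1$ proof carries over.

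A secondary but real problem is your compactness step. Convergence of $F_i$ on $B(o,2^i)$ scaled to unit size with error $\Psi(i^{-1})$ gives, for a fixed compact $K\subset B(o,2^j)$, only $F_i(K)\subset\{\rho\lesssim 2^{j-i}+\Psi(i^{-1})\}$; after rescaling by $|t_i|\sim 2^i$ this yields $G_i(K)\subset\{\rho\lesssim 2^j+2^i\Psi(i^{-1})\}$, and $2^i\Psi(i^{-1})$ need not stay bounded. So the claimed $C^0$ bounds on compact sets, hence the Arzel\`a--Ascoli extraction and the nondegeneracy of the limit $G_\infty$, do not follow from Proposition~\ref{specialembeddings}(1) and (4) alone; in \cite{Sz20} the uniform control of the embeddings at fixed interior scales is itself an output of the inductive decay estimate, not an input to it.
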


\begin{proof}
  The proof is very similar to the $\mathbf{C} \times A_1$ case in
  \cite{Sz20}, modulo the special embeddings established in
  Proposition~\ref{specialembeddings} and the computations of
  quadratic harmonic functions and the corresponding vector fields and
  automorphisms on $\mathbf{C} \times A_2$ that are supplemented in
  Example~\ref{exmp:cxa2}. Note that the key reason we can follow the
  proof in \cite{Sz20} is that all the vector fields and automorphisms
  associated to quadratic harmonic functions of
  $\mathbf{C} \times A_2$ actually fix $X_{1,0}$.
\end{proof}

We now turn to distinguishing the metrics in
Theorem~\ref{thm:construction}. For this, we need the following
explicit asymptotic information of the metrics that we have
constructed:

\begin{prop}
\label{prop:asymptotics}
Let $\omega_{1,b}$ be the Calabi-Yau metric on $X_{1,b}$ constructed
in Theorem~\ref{thm:construction}, and let $d$ be the distance
function with respect to $\omega_{1,b}$. Then we have
\[
\lim_{\rho \to \infty} \frac{d(0, (z,x))^2}{|z|^2 + r^2} = 1,
\]
where $0 \in X_{1,b} \subset \mathbf{C}^4$.
\end{prop}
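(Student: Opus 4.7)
The plan is to adapt the proof of Corollary~\ref{cor:asymptotics} to the setting of $(X_{1,b}, \omega_{1,b})$. That earlier argument rested on two ingredients: the $(D\epsilon)$-Gromov--Hausdorff approximation between $(X_1,\omega)\cap\{D^{1/2}<\rho<D\}$ and $(X_0,\omega_0)\cap\{D^{1/2}<\rho<D\}$ given by Proposition~\ref{tangentconeisx0}, and the explicit description of this approximation as a nearest-point projection away from the singular ray and as $\mathrm{pr}_1$ near it. Once we establish the analogue of Proposition~\ref{tangentconeisx0} for $(X_{1,b},\omega_{1,b})$, the remainder of Corollary~\ref{cor:asymptotics} --- splitting a minimal geodesic from $0$ to $x$ at a midpoint $x'$ with $\rho(x')=D^{1/2}$, applying the GH comparison to the far piece, and using the cosine law on the cone $X_0$ --- goes through unchanged.

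To produce the required GH approximations on $(X_{1,b},\omega_{1,b})$, recall from Section~\ref{sec:x1b} that outside a large compact subset, $\omega_{1,b}$ equals $\ddbar((\Phi+u)\circ G^{-1})$ modified by a subquadratic K\"ahler potential coming from Hein's perturbation, where $G:X_1\to X_{1,b}$ is the ambient nearest-point projection with respect to $\ddbar\rho^2$. By Proposition~\ref{prop:bdecay}, the errors $\omega_b-\omega$ and $J_b-J$ lie in $C^{k,\alpha}_{\delta-2,-2}$ with $\delta<0$; the contraction-mapping solution gives $\|u\|_{C^{2,\alpha}_{\delta,\tau}}<\epsilon_0$; and Hein's perturbation contributes a further subquadratic correction. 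Transporting the model comparisons of Propositions~\ref{awaysingular} and \ref{nearsingular} through $G$ and absorbing these three error sources, one obtains nearest-point projections $X_{1,b}\cap\mathcal{U}\to X_0$ and $X_{1,b}\cap\mathcal{V}\to\mathbf{C}\times V_1$ satisfying the same weighted closeness estimates. The proof of Proposition~\ref{tangentconeisx0} then carries over verbatim, yielding a $(D\epsilon)$-GH approximation $\Psi:X_{1,b}^D\to X_0^D$.

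With $\Psi$ at hand, the argument in Corollary~\ref{cor:asymptotics} applies directly: for $D=\rho(x)$ and $x'$ on the minimal geodesic from $0$ to $x$ with $\rho(x')=D^{1/2}$, one has
\[
\bigl|d(0,x)-d(0,x')-d_{X_0}(\Psi(x'),\Psi(x))\bigr|<4D\epsilon,
\]
and dividing by $\tilde\rho(x)=\sqrt{|z|^2+r^2}$ while using $\tilde\rho(\Psi(x))/\tilde\rho(x)\to 1$ together with the cosine law on $X_0$ produces $d(0,x)^2/\tilde\rho(x)^2\to 1$. The main obstacle is the careful bookkeeping required to show that the three sources of error --- from $\omega_b-\omega$, from the Monge--Amp\`ere perturbation $u$, and from Hein's perturbation --- each decay in the weighted norms used by the comparison propositions. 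A minor subsidiary point is that the ambient coordinates $z,r$ are slightly displaced by $G$; since $G$ moves points in $\mathbf{C}^4$ by $O(|b|\rho^{-4})$ while $\tilde\rho\sim\rho$, this displacement is negligible at infinity and does not affect the limit.
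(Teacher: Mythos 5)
Your argument is correct and follows essentially the same route as the paper, whose entire proof is the observation that $\omega_{1,b}$ is a small perturbation of the approximate solution $\omega$ in the weighted spaces, so that the limit follows directly from Corollary~\ref{cor:asymptotics}; your write-up merely spells out the intermediate step of transporting Proposition~\ref{tangentconeisx0} to $(X_{1,b},\omega_{1,b})$. (One small quibble: your $O(|b|\rho^{-4})$ bound for the displacement under $G$ is exactly the naive estimate the paper warns against and fails near the singular ray, but all your argument actually needs is that the displacement is $o(\rho)$, which does hold in every region of Proposition~\ref{prop:bdecay}.)
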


\begin{proof}
Since the metric $\omega_{1,b}$ is a small perturbation in weighted
spaces of the approximate solution $\omega$ on $X_1$, this follows
directly from Corollary~\ref{cor:asymptotics}.
\end{proof}

Together with the above proposition, we can follow the idea of the
proof of Proposition~\ref{specialembeddings} to distinguish the model
metrics $\omega_{1,b}$ on $X_{1,b}$:

\begin{prop}\label{prop:distinguish}
There exist a biholomorphism $F: X_{1,b} \to X_{1,b'}$ and a scaling
$c>0$ such that $F^*\omega_{1,b'} = c^2\omega_{1,b}$ if and only if
$b = b'$.
\end{prop}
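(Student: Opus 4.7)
The ``if'' direction is immediate: take $F = \mathrm{id}$ and $c = 1$. For ``only if,'' assume $F \colon X_{1,b} \to X_{1,b'}$ is a biholomorphism with $F^*\omega_{1,b'} = c^2 \omega_{1,b}$. The plan is to produce two sequences of special embeddings in the sense of Proposition~\ref{specialembeddings} for one and the same Calabi--Yau metric $c^2\omega_{1,b}$ on $X_{1,b}$ and then match the resulting coefficients $(a_i, b_i)$, exploiting that Proposition~\ref{prop:asymptotics} lets one write the special embeddings down explicitly in the natural ambient coordinates.

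For the first sequence (Approach~A), let
\begin{equation*}
  \tilde F_i^{(A)}(p) = (c^3 2^{-3i} x_1,\ c^3 2^{-3i} x_2,\ c^2 2^{-2i} y,\ c\,2^{-i} z),
\end{equation*}
where $(x_1, x_2, y, z)$ are the natural ambient coordinates of $X_{1,b} \subset \mathbf{C}^4$. Proposition~\ref{prop:asymptotics}, combined with the weighted-space construction of $\omega_{1,b}$, shows that these rescaled coordinates asymptotically match the cone coordinates on $\mathbf{C} \times A_2$, so $\tilde F_i^{(A)}$ satisfies conditions~(1)--(5) of Proposition~\ref{specialembeddings} for $c^2\omega_{1,b}$ at scale $2^i$; substituting into $z + by + x_1^2 + x_2^2 + y^3 = 0$ gives the image hypersurface $\{c^5 2^{-5i} Z + bc^4 2^{-4i} Y + X_1^2 + X_2^2 + Y^3 = 0\}$, so $a_i^{(A)} = c^5 2^{-5i}$ and $b_i^{(A)} = bc^4 2^{-4i}$. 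For the second sequence (Approach~B), let $F_i \colon X_{1,b'} \to \mathbf{C}^4$ be the analogous special embeddings for $(X_{1,b'}, \omega_{1,b'})$ with image $\{2^{-5i} Z + b' 2^{-4i} Y + X_1^2 + X_2^2 + Y^3 = 0\}$; since $F$ sends balls of radius $R/c$ in $\omega_{1,b}$ to balls of radius $R$ in $\omega_{1,b'}$ and $(F^*\omega_{1,b'})^3 = c^6 \omega_{1,b}^3$, the composition $\tilde F_i^{(B)} = F_i \circ F$ is likewise a special embedding for $(X_{1,b}, c^2\omega_{1,b})$, with $a_i^{(B)} = 2^{-5i}$ and $b_i^{(B)} = b' 2^{-4i}$.

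Both $\tilde F_i^{(A)}$ and $\tilde F_i^{(B)}$ are therefore sequences of special embeddings for the single metric $c^2\omega_{1,b}$. The uniqueness argument in the proof of Proposition~\ref{specialembeddings} --- the analogue of \cite[Lemma~5]{Sz20} applied to two coordinate systems satisfying the normalized form of the equation, together with the unitarity $|k_1| = |k_2| = 1$ forced by adapted $L^2$-orthonormal bases and the consistency $k_2^3 = k_3^2$ needed to preserve the $X_1^2 + X_2^2 + Y^3$ terms --- preserves the moduli $|a_i|$ and $|b_i|$. Combined with the normalizations $a_i > 0$ and $b_i \ge 0$ in Proposition~\ref{specialembeddings}(2), this forces $a_i^{(A)} = a_i^{(B)}$ and $b_i^{(A)} = b_i^{(B)}$, so $c^5 = 1$, which gives $c = 1$ since $c > 0$, and then $bc^4 = b'$, which gives $b = b'$.

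The main delicate point will be the rigorous verification that Approach~A genuinely produces special embeddings --- specifically, that the rescaled natural ambient coordinates on $X_{1,b}$ form adapted $L^2$-orthonormal bases up to $\Psi(i^{-1})$ errors and satisfy every condition of Proposition~\ref{specialembeddings}. This relies on Proposition~\ref{prop:asymptotics} for the distance asymptotics together with the fact from Section~\ref{sec:x1b} that $\omega_{1,b}$ is a small weighted perturbation of the approximate solution on $X_1$ transported via the nearest-point projection.
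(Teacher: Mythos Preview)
Your approach is workable in principle but takes a significant detour through Proposition~\ref{specialembeddings}, whereas the paper's proof is a short direct computation. The paper never invokes sequences of special embeddings at all: it observes that since $F^*\omega_{1,b'}=c^2\omega_{1,b}$, the pulled-back coordinate functions $z'\circ F,\,y'\circ F,\,x_i'\circ F$ have the same growth rates as $z,y,x_i$, and hence (by the same elementary comparison-of-generators argument you cite from \cite{Sz20}) one must have $z'\circ F=a_1z$, $y'\circ F=a_2y$, $x_i'\circ F=a_3x_i$ with $a_1=a_2^3$, $a_3^2=a_2^3$, and $ba_2^2=b'$. Comparing volume forms gives $|a_3|^4|a_2|^2=c^6$, and applying Proposition~\ref{prop:asymptotics} \emph{directly} to the ratio $d^2/(|z|^2+r^2)$ on both sides yields $c=|a_1|$. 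These three relations force $|a_2|=1$, $c=1$, and $b=b'$. No adapted bases, no $L^2$-orthonormality, no limit in $i$.

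The gap in your argument is more serious than you acknowledge. Proposition~\ref{specialembeddings} as stated asserts only that the combination $b_ia_i^{-1/2}2^{3i/2}$ is sequence-independent, not $|a_i|$ itself; your two approaches give $bc^{3/2}$ and $b'$ for this invariant, which is one equation in two unknowns. To pin down $|a_i|$ separately you need both sequences to come from adapted $L^2$-orthonormal bases, and your Approach~A is \emph{not} built that way --- it is just a rescaling of the ambient coordinates. Verifying that these rescaled coordinates are $L^2$-orthonormal up to $\Psi(i^{-1})$ would require integrating $|z|^2$, $|y|^2$, $|x_i|^2$, and their cross terms against $\omega_{1,b}^3$ over large balls and matching the cone values; Proposition~\ref{prop:asymptotics} gives only pointwise distance asymptotics and does not obviously control these integrals. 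So the ``delicate point'' you flag is essentially the whole content of the argument, and filling it is at least as much work as the paper's direct route.
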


\begin{proof}
  Let $(z, x_1,x_2,y)$ and $(z',x_1',x_2',y')$ be the coordinate
  functions on $X_{1,b}$ and $X_{1,b'}$, respectively. Since
  $F^*\omega_{1,b'} = c^2\omega_{1,b}$, The set of functions
  $(z'\circ F,x_1'\circ F,x_2'\circ F,y'\circ F)$ has the same set of
  growth rates that of $(z, x_1,x_2,y)$. By comparing the equations we
  necessarily have $z'\circ F = a_1z$, $y'\circ F = a_2 y$ and
  $x_i'\circ F = a_3 x_i$ (say) for some $a_i \ne 0 \in \mathbf{C}$,
  and
  \[
    \frac{1}{a_1} = \frac{b}{b'a_2} = \frac{1}{a_2^3} = \frac{1}{a_3^2}.
  \]
  In particular we have $F(0) = 0$ and $F(x) \to \infty$ as
  $\rho(x) \to \infty$. By comparing the volume forms we have
  \[
    |a_3|^4 |a_2|^2 = c^6.
  \]
  On the other hand, using the assumption and the fact that $r$ is
  homogeneous, we get
  \[
    F^*\left(\frac{d(0, (z',x'))^2}{|z'|^2 + r^2}\right) = \frac{c^2
      d(0, (z,x))^2}{|a_1|^2(|z|^2 + r^2)}.
  \]
  Taking limit of both sides of the above equation as $\rho \to \infty$
  and using Proposition~\ref{prop:asymptotics}, we conclude that
  $c = |a_1|$. Combining these we see that $c=1$ and $b=b'$.
\end{proof}

\begin{proof}[Proof of Theorem~\ref{thm:A2}]
  This is a combination of Theorem~\ref{thm:construction} and
  Proposition~\ref{prop:distinguish}.
\end{proof}

Based on these elementary observations, we state the following
refinement of a conjecture of Sz\'ekelyhidi~\cite{Sz20}:

\begin{conj}
\label{conj:A2}
The space of Calabi-Yau metrics on $\mathbf{C}^3$ with tangent cone
$\mathbf{C} \times A_2$ at infinity, up to biholomorphism and scaling,
is parametrized by $\mathbf{C}/S^1 \cong \mathbf{R}_{\ge 0}$.
\end{conj}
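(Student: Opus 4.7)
My plan is to prove that every Calabi-Yau metric $\omega$ on $\mathbf{C}^3$ with tangent cone $\mathbf{C} \times A_2$ at infinity is, after scaling, isometric to $\omega_{1,b}$ for some $b \in [0,\infty)$ given by the invariant of Proposition~\ref{specialembeddings}, and that the assignment $b \mapsto [\omega_{1,b}]$ parametrizes the moduli space. Proposition~\ref{prop:distinguish} already gives injectivity of this map, and the case $b=0$ is Proposition~\ref{prop:x1unique}, so the remaining content is surjectivity for $b > 0$.

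First I would extract the invariant and construct a candidate biholomorphism. Given $\omega$, apply Proposition~\ref{specialembeddings} to obtain $b \geq 0$. Using part~(5) of that proposition to rescale each $F_i$ by appropriate powers of $2$, the rescaled embeddings $\widetilde F_i$ all land in the same hypersurface $X_{1,b}$ and yield Gromov-Hausdorff approximations to the embedding of $X_0$ on an exhausting sequence of balls. A diagonal/compactness argument, together with the weighted $C^{k,\alpha}$ estimates of Section~\ref{sec:x1} and the coordinate classification used in the proof of Proposition~\ref{specialembeddings}, should produce a limiting holomorphic biholomorphism $\Psi: \mathbf{C}^3 \to X_{1,b}$.

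Next, compare $\tilde\omega := (\Psi^{-1})^* \omega$ with $\omega_{1,b}$ on $X_{1,b}$. Both are Calabi-Yau with tangent cone $X_0$, and by Proposition~\ref{prop:asymptotics} they have the same leading quadratic distance behaviour $|z|^2 + r^2$. Hence their K\"ahler potentials differ by a function of growth rate at most quadratic whose leading term is a $(Jr\partial_r)$-invariant harmonic function on $X_0$. From Example~\ref{exmp:cxa2} this leading term lies in $\langle u_1, u_2\rangle$. The $u_1$-contribution is absorbed by the Killing automorphism it generates, which fixes $X_{1,b}$. After this gauge-fixing, the obstruction consists of a multiple of $u_2$ plus a subquadratic remainder, and the remainder is killed by the uniqueness result \cite[Theorem 1.3]{CSz}. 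Thus the whole problem reduces to showing that the $u_2$-coefficient must vanish.

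The hard part is precisely this $u_2$ step when $b > 0$. Its associated vector field generates the dilations $\Phi_t$ of \eqref{eq:aut}, which do not preserve $X_{1,b}$ when $b \neq 0$: they carry $X_{1,b}$ to $X_{1,e^{t/3}b}$. Hence one cannot gauge away a $u_2$-component by applying $\Phi_t$ on a single hypersurface, as was done for $\mathbf{C}\times A_1$ in \cite{Sz20} and for $b=0$ in Proposition~\ref{prop:x1unique}. The strategy I would pursue is a three-annulus / contraction-mapping iteration in which $\tilde\omega$ is compared not to $\omega_{1,b}$ but to the one-parameter family $e^{-4t/9}\Phi_t^*\omega_{1,b}$, so that the flow of the metric error and the flow of the parameter $b$ are tracked simultaneously. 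The key point to make rigorous is that a nonzero $u_2$-coefficient in the harmonic expansion of $\tilde\omega - \omega_{1,b}$ would propagate, through successive special embeddings at scales $2^i$, into a discrepancy in the invariant $b$ extracted from $\tilde\omega$ versus from $\omega_{1,b}$ -- contradicting the fact that $b$ is an intrinsic isometry invariant by Corollary~\ref{cor:distinguish}. Controlling this intertwining of scale and parameter, together with the fact that $\Phi_t$ acts nontrivially on the ambient defining equation, is the essential analytic difficulty and, I expect, the reason the author leaves this statement as a conjecture rather than a theorem.
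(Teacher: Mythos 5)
There is no proof in the paper to compare against: the statement is stated as Conjecture~\ref{conj:A2} and is explicitly left open, with the author discussing exactly the obstructions you run into. Your proposal correctly assembles the pieces the paper does establish (injectivity of $b \mapsto [\omega_{1,b}]$ from Proposition~\ref{prop:distinguish}, the case $b=0$ from Proposition~\ref{prop:x1unique}, and the isometry invariance of $b$ from Corollary~\ref{cor:distinguish}), and you correctly identify the central obstruction for $b>0$, namely that the dilations $\Phi_t$ generated by $u_2$ move $X_{1,b}$ to $X_{1,e^{t/3}b}$ and so cannot be used as a gauge on a fixed hypersurface. But you do not close that gap — you say so yourself — so this is an outline of a strategy, not a proof.

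Beyond the admitted gap, two steps you treat as routine are themselves the hard analytic content. First, producing the limiting biholomorphism $\Psi: \mathbf{C}^3 \to X_{1,b}$ from the rescaled special embeddings is not a soft diagonal argument: without a priori polynomial decay of $\omega$ to the model at a definite rate, the rescaled $\widetilde F_i$ need not converge, and establishing such decay is precisely the iteration carried out in \cite{Sz20} for $\mathbf{C}\times A_1$; it is not available here. Second, Proposition~\ref{prop:asymptotics} is proved only for the glued metrics $\omega_{1,b}$ (via Corollary~\ref{cor:asymptotics} for the approximate solution), so you cannot invoke it for $\tilde\omega = (\Psi^{-1})^*\omega$ coming from an arbitrary metric; the quadratic distance asymptotics of $\tilde\omega$ in the embedding coordinates is part of what must be proven. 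Finally, your proposal does not address the second difficulty the paper highlights: the parameter space $[0,\infty)$ is non-compact, the decay rate of $b_i$ is slower than that of $a_i$, and any three-annulus or contraction argument needs uniform control of the family $(X_{1,b},\omega_{1,b})$ as $b \to \infty$, which the gluing construction of Section~\ref{sec:x1b} does not provide (the compact set removed grows with $b$).
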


Difficulties arise when one tries to generalize the decay estimate
approach in \cite{Sz20} to prove Conjecture~\ref{conj:A2}. An initial
technical issue is that the linear automorphisms
$\Phi_t: \mathbf{C}^4 \to \mathbf{C}^4$ in Example~\ref{exmp:cxa2},
which correspond to the quadratic harmonic function $2|z|^2-r^2$ on
the cone $\mathbf{C} \times A_2$, do not preserve the hypersurface
$X_{1,b}$. It is therefore crucial to understand how the metrics
$\omega_{1,b}$, or their potentials $\varphi_b$, change with respect
to the parameter $b$. In the terminology in \cite[Section~3]{CSz}, we
expect that for a given $b$, the metrics
$\Phi_t^* \omega_{1, e^{t/3}b}$ on $X_{1,b}$ for $|t| \ll 1$ form a
family of model metrics parametrized by small quadratic harmonic
functions on the cone $\mathbf{C} \times A_2$. Another difficulty,
which seems more substantial, is due to the parameter space
$[0,\infty)$ being non-compact. In the sequence of special embeddings,
if $a_i$ deviates largely from $2^{-5i}$, then $b_i$ will deviate even
more from $2^{-4i}$ as the decay rate of $b_i$ is slower than
$a_i$. To follow a similar argument as seen in \cite{Sz20}, we need to
have some kind of uniform control of the family of spaces
$(X_{1,b},\omega_{1,b})$ as $b \to \infty$, possibly with suitable
rescalings. To overcome these difficulties, a finer gluing
construction might be needed in order to understand the metric
behavior in the compact region. Alternatively, one could also try to
establish a priori estimates for the complex Monge-Amp\`ere equation
in the maximal volume growth setting. We leave these to future work.

\end{document}